\title{Constructing strong starters of orders $3p$: triplication with SAT solver}
\author{Oleg Ogandzhanyants$^\dag$, Sergey Sadov$^\ddag$, Margo Kondratieva$^\dag$%
\footnote{Corresponding author. E-mail:{ \tt mkondra@mun.ca}}
}
\date{} 
\pgfplotsset{compat=1.13}
\newcommand{\ZZ}{\mathbb{Z}}
\newcommand{\MExp}[1]{\mathbb{E}#1} 
\newcommand{\Stdev}[1]{\sigma}  
\newcommand{\sumset}{\hat{S}}
\newcommand{\notdiv}{\nmid}
\newcommand{\zsat}{{\tt z3}\ }
\newcommand{\mmod}{\!\! \mod}
\newcommand{\ind}{\mathrm{ind}}
\newcommand{\ab}[1]{\langle #1\rangle}
\newtheorem{lemma}{Lemma}
\newtheorem{theorem}{Theorem}
\theoremstyle{definition}
\newtheorem*{definition}{Definition}
\begin{document}
	
	\maketitle

{\small
$^\dag$  Dept. of Mathematics and Statistics, Memorial University, St. John's NL  A1C~5S7, Canada

$^\ddag$  Moscow, Russia
}

\begin{abstract}
A novel approach to building strong starters in cyclic groups of orders $n$ divisible by 3 from starters of smaller
orders is presented. A strong starter in $\ZZ_n$ ($n$ odd) is a partition of the set $\{1,2,\dots,n-1\}$
into pairs $\{a_i,b_i\}$ such that all pair sums $a_i+b_i$ are distinct and nonzero modulo $n$
and all differences $\pm(a_i-b_i)$ are distinct and nonzero modulo $n$. A special interest to strong starters
of odd orders divisible by 3 is motivated by Horton's conjecture which claims that such starters exist 
(except when $n=3$ or $9$) but remains unproven since 1989.

We begin with a strong starter of order $p$ coprime with 3 and 
describe an algorithm 
to obtain a Sudoku-type problem modulo 3 whose solution, if exists,
yields a strong starter of order $3p$. The process leading from the original to the final starter is called {\em triplication}.

Besides theoretical aspects of the construction, practicality of this approach is demonstrated.
A general-purpose constraint-satisfaction (SAT) solver z3 is used to solve the Sudoku-type problem;
various performance statistics are presented.

\smallskip
{\em Keywords}:
strong starter, 
SAT solver,
triplication
\end{abstract}
    
	\section{Introduction}
	
	We discuss a new approach for constructing strong starters
	(special type of combinatirial designs) in the case 
    of orders 
    $n$ divisible by 3, where their existence has not been asserted theoretically. 	
	
	A \textit{starter} $S$ in an additive abelian group $G$ of odd order $n$ is a partition of the set $G^*=G\setminus\{0\}$ into $k=\frac{n-1}2$ pairs $\{\{a_i,b_i\}\}_{i=1}^{k}$ such that the differences $\{\pm(a_i-b_i), i=1,...,k\}$, comprise $G^*$. 
	See \cite{Dinitz07} for a general reference.
	
	Starters exist in any additive abelian group of odd order $n\ge3$. For example, the partition
	$\{\{x,-x\}\mid x\in G^*\}$ of $G^*$  is a starter in $G$.
	
	For a starter $S$ in $G$ of order $n=2k+1$, consider the set of sums in pairs, $\sumset=\{a_i+b_i\mid\{a_i,b_i\}\in S\}$. 
	If $0\notin \sumset$ and $|\sumset|=k$, the starter $S$ is called {\em strong}. 

    Any abelian group $G$ of odd order $n\geq 7$ for which $3\notdiv n$ admits a strong starter \cite{Horton89}. 
	Horton \cite{Horton89} put forward a conjecture, supported by computational evidence, to the effect that strong starters exist in all abelian groups
	of odd orders except a few groups of orders $n=3,5,9,$ for which non-existence is easily verified.
		
	In this paper, $G$ will always be a cyclic group $\ZZ_n=\ZZ/n\ZZ$, its elements identified with
	integers in the range $0$ to $n-1$. We will for brevity say ``$S$ is a starter of order $n$'' meaning a starter in $\ZZ_n$.

The question on the existence of strong starters in $\ZZ_n$ with $n$ divisible by 3 is only partially settled.
According to Horton's summary in \cite{Horton89}, theoretical results supplemented by Dinitz and Stinson's computations  \cite{DinitzStinson81, Stinson85} 
guarantee the existence of strong starters of odd orders $n=3^r p$, $r>0$, only if either
$n< 1000$, $n\notin\{3,9\}$, or if 
$\gcd(p,3)=1$ and $3^r<1000$, $3^r\notin\{3,9\}$ (that is, $r\in\{3,4,5,6\}$).%
\footnote{Our recent computations, to be described in a separate paper, extend the upper boundary in the 
inequalities for $n$ or $3^r$ from 1000 to $2^{16}-1$.}
In particular, if $n=3p>1000$, where $p$ is coprime with $3$, the existence of strong starters of order $n$
is not asserted in literature.

This gap in theoretical knowledge,
which, 
according to a recent survey by Stinson \cite{Stinson23}, 
apparently still persisted in 2022,
stimulates interest to ways of obtaining starters of orders divisible by 3.

We will describe a seemingly novel method of construction of strong starters in $\ZZ_n$, where $n=3p$,
$\gcd(3,p)=1$. We call it {\em triplication} (of a strong starter). The method takes a strong starter of order $p$ 
as a ``base'' and reduces the problem of construction of
a strong starter of order $3p$ to a certain system of linear equations and inequalities  modulo 3. Below, we call it a {\em Sudoku-type problem $\mmod 3$}, or a {\em modular Sudoku problem}. 
The required starter of order $3p$ is recovered from the base starter's {extension} (defined in our method) and a solution of the modular Sudoku problem
by means of the Chinese Remainder Theorem (CRT).

The said modular Sudoku problem is a new object of research,
which invites such natural questions as a possibility to
prove the existence of a solution theoretically and numerical explorations.  
Here, as a first demonstration of the practicality of the new method with
minimal programming effort, 
we present an approach relying on a general purpose constraint-satisfaction
(SAT) solver.%
	\footnote{SAT solver is a computer program which aims to solve the  satisfiability problem (SAT). On input of a set of equations, a SAT solver outputs whether the formulas are satisfiable. ``Since the introduction of algorithms for SAT in the 1960s, modern SAT solvers have grown into complex software artifacts involving a large number of heuristics and program optimizations to work efficiently.''\cite{Wiki-SAT}}
Specifically, the SAT library \zsat by Microsoft was used --- for
no other reason than its ready availability and a simple Python interface.

We plan to describe a specialized algorithm for solving the modular Sudoku problem, which by far outperforms the SAT solver, in a separate paper. 

\section{Ingredients of triplication by example}
\label{sec:example}

Let us consider a strong starter of the least possible order $p=7$,
$$
T=\{\{2,\ 3\},\ \{4,\ 6\},\ \{5,\ 1\}\}.
$$
Indeed, $T$ is clearly a 2-partition of $\ZZ_7^*$; it is a starter because $\{\pm(2-3),\ \pm(4-6),\ \pm(1-5)\}\mmod 7=\{\pm1,\ \pm2,\ \pm3\}=Z_7^*$. It is strong: $\hat{T}=\{2+3,\ 4+6,\ 1+5 \}= \{5,\ 3,\ 6\}\subset \ZZ_7^*$ and $|\hat{T}| =(7-1)/2=3$. 
		
In order to motivate the triplication method, we 
will scrutinize a final answer --- a starter $S$ of order 21, which in the actual course of events would have been reconstructed 
from $T$ and a solution of the appropriate Sudoku-type $\mmod 3$ problem. But here,
we work backwards, from $S$ to $T$. In a series of exercises, we will exhibit
all types of constraints embedded in the modular Sudoku problem.

\smallskip
Well, so by an act of black magic we've got a strong starter $S$ of order 21:
$$
\begin{array}{rcl}
S&=& \{\{11,\ 18\}, \ \{9,\ 17\},\{13,\ 14\},\ \{8,\ 2\},\ \{4,20\},\\ 
 &&	\;\,\{15,3\}, \{5,\ 7\},\ \{1,\ 12\},\ \{19,\ 16\},\ \{6,\ 10\}\}.
\end{array}    
$$  
%
%
Consider two modular reductions of $S$.
The set of pairs of $S$ (or {\em pairing} for short) $\mmod 3$ is
$$
\Sigma_3=\{\{2,\ 0\},\ \{0,\ 2\},\{1,\ 2\},\ \{2,\ 2\},\ \{1,2\},\ \{0,0\},\ \{2,\ 1\},\ \{1,\ 0\},\ \{1,\ 1\},\ \{0,\ 1\}\}.
$$
The pairing $\mmod 7$ corresponding to $S$ is

$$
\Sigma_7=\{\{4,\ 4\},\ \{2,\ 3\},\{6,\ 0\},\ \{1,\ 2\},\ \{4,6\},\ \{1,3\},\ \{5,\ 0\},\ \{1,\ 5\},\ \{5,\ 2\},\ \{6,\ 3\}\}.
$$

The starter $S$ can be uniquely reconstructed from $\Sigma_3$ and $\Sigma_7$  
by CRT. 
	
It is convenient to write the pairings $\Sigma_3$ and $\Sigma_7$ in table format as follows:

\bigskip

\begin{tabular}{c c c}
    $\Sigma_3$ && $\Sigma_7$\\[1ex]
	\begin{tabular}{|cc||cc||cc|}
		\hline 
		&&$2,$ & $0$ & &  \\
		\hline
		$0$, & $2$  & $1$, & $2$& $2$, & $2$\\ 
		\hline
		$1$, & $2$ & $0$, & $0$& $2$, & $1$ \\ 
		\hline
		$1$, & $0$ & $1$, & $1$& $0$, & $1$ \\ 
		\hline
	\end{tabular}
&	
$\qquad$ &
    \begin{tabular}{|cc||cc||cc|}
		\hline 
		&&$4,$ & $4$ & &  \\
		\hline
		$2$, & $3$  & $6$, & $0$& $1$, & $2$\\ 
		\hline
		$4$, & $6$ & $1$, & $3$& $5$, & $0$ \\ 
		\hline
		$1$, & $5$ & $5$, & $2$& $6$, & $3$ \\ 
		\hline
	\end{tabular}
\end{tabular}

\bigskip

From now on, we treat $\Sigma_3$ and $\Sigma_7$ as pairings or as tables as convenient in the context.

Note that the first column of the table $\Sigma_7$, read top to bottom, is that very starter 
$T$ of order 7 seen earlier in this section.

There is no analog of this fact pertaining to the table $\Sigma_3$, since the set $\ZZ_3^*=\{1,2\}$
is too small to be partitioned into 3 pairs.

The nature of the 2nd and 3rd columns of the table $\Sigma_7$ remains at this stage a mystery.

Leaving a formal description of relations between the elements in each table and relations
between the elements in different tables to later sections, we recommend the reader or his/her (grand)child to do
some simple arithmetical exercises. (Familiarity with elementary modular arithmetic is required.)

\smallskip
(a1) Check that all differences $\mmod 3$ in each row of the table $\Sigma_3$ are distinct (e.g.
$0-2\equiv 1 \mmod 3, 1-2\equiv 2 \mmod 3, 2-2\equiv 0 \mmod 3 $).

(a2) Check that all differences $\mmod 7$ in each row of the table $\Sigma_7$ are equal (e.g.
$2-3\equiv 6-0\equiv 1-2 \mmod 7$).

\smallskip
(b0) Calculate all pair sums modulo 3 for the table $\Sigma_3$ and all pair sums modulo 7 for the table $\Sigma_7$.  

(b1) Find a cell in the table $\Sigma_7$ with sum $=0\mmod 7$ and check that in the corresponding cell of
the table $\Sigma_3$ the sum $\mmod 3$ is nonzero.

(b2) Check that whenever two cell sums $\mmod 7$ in the table $\Sigma_7$ are equal, the corresponding
cell sums $\mmod3$ for the table $\Sigma_3$ are distinct. For example, the cells $(2,3)$ and $(5,0)$
have equal pair sums $\mmod 7$. The corresponding cells in the table $\Sigma_3$ are $(0,2)$ and $(2,1)$,
and their sums $\mmod 3$ are distinct.

\smallskip
(c1) Note the positions of zeros in the table $\Sigma_7$ and observe that the corresponding places 
in the table $\Sigma_3$ are occupied by two distinct nonzero values, $1$ and $2$.

(c2) For each $c\in\{1,\dots,6\}$, note that the number $c$ appears in exactly three places in the table $\Sigma_7$,
and the corresponding positions in the table $\Sigma_3$ are occupied by three distinct numbers $0,1,2$.

\smallskip
(d) The structure of the table $\Sigma_7$ is a cryptographic challenge of a sort. 
Alice wants to pass three messages 
represented by pairs in the first column to her party Bob by radio. The messages are not for prying 
ears.
So Alice encrypts them before going live. She sends one message on Monday, repeats it on Tuesday;
a new message on Wednesday, repeated on Thursday; then a message on Friday, repeated on Saturday.
On odd days, the encrypted message aired by Alice and received by Bob and also by the curious interceptor 
Cindy is represented by the pair in the second column (same row as the original message); on even days,
the encryption is the pair in the third column.
This past week was lucky for Cindy as she had an agent in Alice's camp, through which she came in possession 
of all the original messages as well as the cipher key, the number repeated twice in the uppermost cell
of the table (in our case, the number $4$). Since Saturday night, the agent's fate has been unknown and there
is no warranty that Cindy ever has such a luxury again. She will still intercept encrypted messages aired by Alice.
The cipher key is likely to change next week. 
Being in Cindy's shoes, could you figure out the encryption scheme and continue to eavesdrop Alice's secrets?

\smallskip
(e) There are strong starters of order 21 other than $S$ whose reduction modulo 7 coincides with $\Sigma_7$.
For example,
$$
\begin{array}{rcl}
S' &=& \{\{4, 18\}, \ \{16, 10\}, \ \{20, 7\}, \ \{8, 9\}, \ \{11, 13\}, \\
&&	\;\,
\{1, 17\}, \ \{5, 14\}, \ \{15, 19\}, \ \{12, 2\}, \ \{6, 3\}\}.
\end{array}    
$$  
The reduction of $S'$ modulo 3,
$$
\Sigma'_3=\{\{1, 0\},\ \{1, 1 \},\ \{2, 1 \},\ \{2, 0 \},\ \{2, 1 \},\ \{1, 2 \},\ \{2, 2 \},\ \{0, 1 \},\ \{0, 2\},\ \{0, 0\}\}.
$$
is different from $\Sigma_3$ --- this had to be expected as otherwise, by CRT, $S$ and $S'$ would be identical. 

\medskip
Whether or not the reader had patience to do all or some of the proposed exercises,
we summarize essential points to be learned from this example. (Replace $7$ by $p$ and $21$ by $3p$
for a general picture. Here $p\ge 7$ coprime to 6.)

\begin{enumerate}
\item 
The resulting starter $S$ of order 21 can be uniquely recovered from the two pairings $\Sigma_3$ and $\Sigma_7$.

\item 
The first column of the table $\Sigma_7$ is a  copy of  the ``base starter'' $T$ of order 7.

\item 
The pair on top of the table $\Sigma_7$ consists of two identical values.
Such a value, or ``key'', is another ``free parameter''  of the
method (along with base starter).

\item 
The second and third columns of the table $\Sigma_7$ are uniquely determined by the base starter and the key.
The readers who figured out exercise (d) know the formulas; others will find them in the next section. 

\item
While the order of elements within pairs in the definition of a starter is unimportant,
our method requires ordered pairs. Indeed, the relations between
the tables for $\Sigma_3$ and $\Sigma_7$ dealt with in exercises (c1,2) depend on specific order of elements
in pairs.

\item 
The table modulo 3 is not arbitrary; its values must satisfy certain constraints.
The constraints can be split into three groups (cf.\ exercises of groups a,b,c). A common feature of these
constraints can be stated as follows. Whenever an {\bf equality}\ relation of certain special type 
between elements in the table modulo 7 is observed, the corresponding 
elements in the first table must satisfy an {\bf inequality}\ of similar arithmetical structure.

\item 
The table $\Sigma_3$ is not to be expected to be computable easily and by straightforward formulas given the outlined set-up.
An admissible filling of $\Sigma_3$ with numbers $0,1,2$, if it exists, is not to be expected to be unique.
Rather, the problem of determination of $\Sigma_3$ is a constraint-satisfaction problem modulo 3.

\item 
Finally, we note that a starter which comes out as a result of this procedure (that's what
the ``black magic'' leading to our starter $S$, as well as to $S'$ in exercise (e), 
was really about), cannot be just any strong starter of order 21. 
Implicitly, this follows from the fact that that there exist no strong starters of order 5,
but there are\footnote{precisely, 32 strong starters or 4 equivalence classes \cite{Vesa10}.} 
strong starters of order 15.

The statement that a certain $S$ is not a result of triplication needs clarification.
For instance,  we require that the set of pairs comprising the first column of the table $\Sigma_7$ must be a strong
starter of order 7. But this condition is not invariant under permutation of pairs in $S$, while
the notion of a starter is. 
An invariant way to detect that $S$ is not a result of triplication is mentioned at the end of Sec.~\ref{sec:theorems}.
\end{enumerate}


The rest of the paper is organized as follows.    

In Section~\ref{sec:sigma_p}, a construction of $\Sigma_p$ is revealed (that is, a solution to exercise (d)).  Sections~\ref{sec:weak} and ~\ref{sec:color} define two auxiliary tables derived from $\Sigma_p$. 
Section~\ref{sec:sigma_3} is devoted to setting up the modular Sudoku problem: 
the table $\Sigma_3$ of unknowns $\mmod 3$ is introduced and the constraints imposed on the unknowns
(those observed in the exercises (a1), (b*), (c*))
are formally described with help of the tables from Sec.~\ref{sec:weak}, \ref{sec:color}. 

Section~\ref{sec:theorems} is devoted to rigorous theory: we prove that a solution of the modular Sudoku problem yields a strong starter of order $3p$
(Theorem~\ref{thm:sudoku_to_starter}), establish a necessary condition of solvability (Theorem~\ref{thm:cond-key}), and observe an involutive symmetry of the set of solutions (Theorem~\ref{thm:sudoku-symmetry}).
Also, we mention a way (sufficient condition) to recognize that a given starter of order $3p$ cannot be obtained
by triplication.

Finally, in Section~\ref{sec:computations}
we report on our experience of solving the modular Sudoku problem with help of the SAT solver \zsat
and make some comments on comparative performance.
			
\section[The triplication table Sigma(p)]{The triplication table $\Sigma_p$}
\label{sec:sigma_p}

The construction to be described takes two input parameters:
\begin{itemize}
\item a 
starter $T$ of odd order $p$ coprime with 3;
\item a number $t\in\{0, 1,\dots,p-1\}$, which will be called the {\em key}.
\end{itemize}

The number of pairs in $T$ will be denoted
$$
q=\frac{p-1}2.
$$


\smallskip
In this section we treat $T$ as an {\bf ordered}\ tuple of {\bf ordered}\ pairs
(so that pairs and their elements can be unambiguously indexed).
To emphasize this notationally, we will from now on use parentheses as delimiters for pairs: $(x,y)$,
and square brackets as delimiters for pairings: $[(x_0,y_0), (x_1,y_1), (x_2,y_2)]$ etc.
Index values will begin at 0 or 1 as convenient. An abbreviated notation will be used,
where the above pairing will be written as $[(x_i,y_i)]_{i=0}^2$ or even as $[(x_i,y_i)]$ 
if the index range is clear from the context.

\smallskip
Allowing a slight abuse of notation, we will denote by $\Sigma_p$ two closely related but not altogether identical objects: a tuple (linearly arranged array) consisting of $3q+1$ ordered pairs and a corresponding table where the same pairs are arranged in a two-dimensional layout.

The tuple $\Sigma_p$ will be called the {\em extension}\ of the base starter $T$
and the table $\Sigma_p$  will be called the {\em  triplication table}.

\medskip
\noindent
\underline{Construction}.
 %
        Given a tuple 
        $$
        T=[(x_1,y_1), (x_2,y_2),\dots, (x_q,y_q)],
        $$
	we first form the auxilliary tuples 
    $$
    T_t^+=[(t+x_i,\ t+y_i)]\mmod p
    $$ 
    and 
    $$
    T_t^-=[(t-y_i,\ t-x_i)]\mmod p.
    $$

In order to obtain the {\bf table} $\Sigma_p$, we place the tuples $T,\ T_t^+,\ T_t^-$ in the first, second and third columns respectively.  
    The pair $(t,t)$ is placed on the top of the 2nd column the table. 
    The result is shown below.

    \bigskip
    
    \centerline{
	\begin{tabular}{|cc||cc||cc|}
		\hline 
		&& $t$, & $t$ & &  \\
		\hline
		$x_1$, & $y_1$ & $t+x_1$, & $t+y_1$ & $t-y_1$, & $t-x_1$\\ 
		\hline
		$x_2$, & $y_2$ & $t+x_2$, & $t+y_2$ & $t-y_2$, & $t-x_2$\\ 
		\hline
		$x_3$, & $y_3$ & $t+x_3$, & $t+y_3$ & $t-y_3$, & $t-x_3$\\ 
		\hline
		...,&...&...,&...&...,&...\\
		\hline
		$x_{q}$, & $y_{q}$ & $t+x_{q}$, & $t+y_{q}$ & $t-y_{q}$, & $t-x_{q}$\\ 
		\hline
	\end{tabular}
    }

\bigskip
The top row of the table consists of just one pair $(t,t)$, to be called the {\em top pair}. All other rows contain three pairs each and will be referred to as {\it regular rows}\ of the table $\Sigma_p$. 

   The {\bf extension tuple}\ 
   $$
   \Sigma_p =[(u_i,v_i)]_{i=0}^{3q}
   $$
   of length $3q+1$
   is obtained by reading the elements of the {\bf table} $\Sigma_p$
   row by row, left to right. For example, if $p=7$, then $q=3$, the extension tuple is 
   $$
   \Sigma_7=[(u_0,v_0),(u_1,v_1),\dots,(u_9,v_9)],
   $$
   where $u_i, v_i$ are the elements of the table.
   Let us exhibit the table in this new notation, indicating the linear index of each nonempty cell
   in parentheses:
%

	\bigskip
	
    \centerline{
	\begin{tabular}{|cc||cc||cc|}
		\hline 
		&&$(0)\quad u_0,$ & $v_0$ & &  \\
		\hline
		$(1)\quad u_1$, & $v_1$  & $(2)\quad u_2$, & $v_2$& $(3)\quad u_3$, & $v_3$\\ 
		\hline
		$(4)\quad u_4$, & $v_4$ & $(5)\quad u_5$, & $v_6$& $(6)\quad u_6$, & $v_6$ \\ 
		\hline
		$(7)\quad u_7$, & $ v_7$ & $(8)\quad u_8$, & $v_8$& $(9)\quad u_9$, & $v_9$ \\ 
		\hline
	\end{tabular}
    }

    \bigskip

The first formal statement concerning properties of $\Sigma_p$ is a generalization of exercise (a2)
of Section~\ref{sec:example}.
Consider the differences 
$$
\delta_i=u_i-v_i \mmod p.
$$
Observe that $\delta_0=t-t=0$. 
    
\begin{lemma}[Row property of the table $\Sigma_p$]
\label{lem:rowdifs}
    The pairs in the same regular row of the table $\Sigma_p$ have the same difference. 
    The set of these common differences is the same $\pmod p$ as the set of pair differences in the tuple $T$.
\end{lemma}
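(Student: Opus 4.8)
The plan is to verify the row property by direct computation, one regular row at a time. Since this lemma generalizes exercise (a2) of Section~\ref{sec:example}, I expect it to reduce to the single observation that the key $t$ cancels in each of the relevant subtractions. I would fix an arbitrary regular row index $i\in\{1,\dots,q\}$ and write down the three pairs occupying that row, then form each pair's difference $u-v\mmod p$.

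For the first (base) column the pair is $(x_i,y_i)$, with difference $\delta:=x_i-y_i\mmod p$. For the second column, which comes from $T_t^+$, the pair is $(t+x_i,\,t+y_i)$; upon subtracting, the two copies of $t$ cancel and the difference is $(t+x_i)-(t+y_i)=x_i-y_i=\delta$. For the third column, coming from $T_t^-$, the pair is $(t-y_i,\,t-x_i)$; here again the two copies of $t$ cancel, and the difference is $(t-y_i)-(t-x_i)=x_i-y_i=\delta$. Thus all three pairs in row $i$ share the common difference $\delta_i=x_i-y_i\mmod p$, which establishes the first assertion.

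For the second assertion I would collect the common differences across all regular rows: as $i$ ranges over $1,\dots,q$, the values $\delta_i=x_i-y_i\mmod p$ run exactly through the pair differences of the ordered tuple $T=[(x_i,y_i)]_{i=1}^q$. Hence the set of common row differences of the table $\Sigma_p$ coincides $\pmod p$ with the set of pair differences of $T$, as claimed.

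The computation is elementary and I do not anticipate a genuine obstacle; the only point requiring care is the ordering convention. Because the method works with \textbf{ordered} pairs, each difference is taken as (first entry) minus (second entry); the third column is deliberately written in the reversed form $(t-y_i,\,t-x_i)$ precisely so that this ordered difference emerges as $x_i-y_i$ rather than $y_i-x_i$. This is what keeps all three row differences \emph{literally} equal, rather than merely equal up to sign, and is the structural reason the lemma holds uniformly across the three columns.
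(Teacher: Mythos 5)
Your proof is correct and follows essentially the same route as the paper's: exhibit the three pairs $(x_i,y_i)$, $(t+x_i,t+y_i)$, $(t-y_i,t-x_i)$ in a regular row and observe that $t$ cancels, so all three ordered differences equal $x_i-y_i$, which also immediately gives the second assertion. Your closing remark on the deliberate reversal $(t-y_i,\,t-x_i)$ in the third column is a correct observation that the paper leaves implicit.
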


\begin{proof}
The $i$-th row ($1\leq i\leq q$) of the table $\Sigma_p$
consists of the pairs
$$
(u_{3i-2}, v_{3i-2})= (x_i, y_i),
\;
(u_{3i-1}, v_{3i-1})= (t+x_i, t+y_i),
\;
(u_{3i}, v_{3i})= (t-y_i, t-x_i).
$$
The differences $\delta_j$, $3i-2\leq j\leq 3i$, have the common value $d_i=x_i-y_i$.
\end{proof}


\noindent{\bf Corollary}
{\it If the tuple $T$ used in Construction is a  starter, then distinct regular rows of $\Sigma_p$ have distinct, non-zero differences $\pmod p$.}

\bigskip
\noindent
{\bf Example}.
Take the starter $T=\{\{2,\ 3\},\{4,\ 6\},\{1,\ 5\}\}$ of order 7 (the same as in Sec.~\ref{sec:example}). 
Take $t=1$.%
\footnote{This is an arbitrary choice. The construction can be carried out for any $t$, but the
solvability of the obtained Sudoku depends on $t$ --- see Theorem~\ref{thm:cond-key} of Sec.~\ref{sec:theorems}.}
%
The triplication table $\Sigma_7$ is as follows: 	

   \bigskip

\centerline{
	\begin{tabular}{|cc||cc||cc|}
		\hline 
		&&$(0)\quad 1,$ & $1$ & &  \\
		\hline
		$(1)\quad 2$, & $3$  & $(2)\quad 3$, & $4$& $(3)\quad 5$, & $6$\\ 
		\hline
		$(4)\quad 4$, & $6$ & $(5)\quad 5$, & $0$& $(6)\quad 2$, & $4$ \\ 
		\hline
		$(7)\quad 1$, & $ 5$ & $(8)\quad2$, & $6$& $(9)\quad 3$, & $0$ \\ 
		\hline
	\end{tabular}
}

\bigskip

Here $\delta_0=0$, $\,\delta_1=\delta_2=\delta_3=6$, $\,\delta_4=\delta_5=\delta_6=5$, $\,\delta_7=\delta_8=\delta_9=3$ $\pmod 7$.

This example will be continued and appended in the next two sections.

\section{Weak sets}    
\label{sec:weak}	

In this section we deal with pairs' sums modulo $p$ in $\Sigma_p$ (treated as a triplication table or as an extension of starter $T$)
$$
\sigma_i=u_i+v_i \mmod p, \quad  i=0,\dots,3q.
$$

\begin{definition}   
Let $W_s$ be the set of all pairs $\{(u_{\ell},v_{\ell})\}$ from $\Sigma_p$ with sum $\sigma_{\ell}=s$. The set $W_s$ is called  {\em weak set} in $\Sigma_p$ if either $s=0$ or $|W_s|>1$.
\end{definition}	

It follows from the definition that different weak sets are disjoint.
A pair $(u_i,v_i)$ is a {\em weak pair}\ if it belongs to one of the weak sets.  
A pair that is not weak is called {\em strong}.

The terminology is motivated by the fact that if $\Sigma_p$ (as a tuple of pairs) is the reduction modulo $p$
of a strong starter $S$ of order $3p$, then strong pairs in $\Sigma_p$, and only they,
have unique nonzero sums $\mmod p$. 

We say that $W$ is a weak set of type $r$ if $|W|=r$.

A weak set of type 1 (if it exists) consists of a single pair with sum $0\mmod p$.

\begin{lemma}[Weak sets are small.]
\label{lem:weak23}
If $T$ is a strong starter then for all weak sets $W$ in $\Sigma_p$ obtained by Construction, $|W|\le3.$
\end{lemma}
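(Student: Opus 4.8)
The plan is to count, for an arbitrary target residue $s$, how many pairs of $\Sigma_p$ can carry the sum $\sigma_i=s$, and to show that this count can never reach $4$. Writing $s_i=x_i+y_i \mmod p$ for the pair sums of the base starter $T$, the sums occurring in $\Sigma_p$ fall into exactly four families: the top pair gives $\sigma_0=2t$; the first column gives $s_i$; the second column gives $2t+s_i$; and the third column gives $2t-s_i$ (all $\mmod p$), for $i=1,\dots,q$. Since a set cardinality is bounded by the number of contributing table positions, $W_s$ can receive at most one pair from each of these four sources, giving the crude a priori bound $|W_s|\le 4$.

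Next I would use strongness of $T$ to sharpen the count. Because $T$ is a strong starter, its pair sums $s_1,\dots,s_q$ are pairwise distinct and all nonzero $\mmod p$. Distinctness guarantees that each of the three columns contributes \emph{at most one} pair to a given $W_s$: for instance, two second-column pairs with sum $s$ would force $2t+s_i=2t+s_j=s$, hence $s_i=s_j$ and $i=j$; the same reasoning handles columns one and three, and the top pair contributes at most one trivially. So the only route to $|W_s|=4$ is for all four sources to simultaneously produce a pair of sum $s$.

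The crux is to exclude this. I would split into cases according to whether the top pair lies in $W_s$, i.e.\ whether $s=2t$. If $s\neq 2t$, the top contributes nothing and only the three columns survive, giving $|W_s|\le 3$ at once. If $s=2t$, then a second-column contribution would require $2t+s_j=2t$, i.e.\ $s_j=0$, and a third-column contribution would require $2t-s_k=2t$, i.e.\ $s_k=0$; both contradict the nonvanishing of the base sums. Hence when $s=2t$ only the top pair and a possible first-column pair remain, so $|W_{2t}|\le 2$. In either case $|W_s|\le 3$, which is the claimed bound for every $W_s$, and in particular for every weak set.

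The main obstacle is exactly this last step: the preliminary four-source count genuinely admits the value $4$, and it is only the nonvanishing of the base-starter sums that forbids the top sum $2t$ from coinciding with a column-two or column-three sum. This is the single place where the hypothesis that $T$ is \emph{strong}, rather than merely a starter, is essential to the estimate.
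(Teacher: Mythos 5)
Your proof is correct and follows essentially the same route as the paper's: both decompose $\Sigma_p$ by table columns, use the distinctness of the base starter's pair sums to show each column contributes at most one pair to a given $W_s$, and invoke the nonvanishing of those sums (strongness of $T$) to rule out a coincidence between the top pair's sum $2t$ and a column-two or column-three sum. The only difference is bookkeeping --- the paper places the top pair inside column 2 and applies pigeonhole over three columns, while you treat it as a fourth source and do a case split on $s=2t$, incidentally obtaining the sharper bound $|W_{2t}|\le 2$.
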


\begin{proof}
We want to show that there are no weak sets of type $>3$.
A weak set $W$ of type $>3$ must contain two different pairs in the same column of the table $\Sigma_p$.
It is easy to see that this cannot happen. 

In column 1:
if $(x_i,y_i)$ and $(x_j,y_j)$ are in $W$, then $\sigma_i=\sigma_j$ --- impossible since $T$ is strong.

Coincidences of pair sums in the regular rows of columns 2 and 3 are excluded similarly.

It remains to consider the case when one of the pairs in $W$ is the top pair $(t,t)$ with sum $2t$.
Suppose there exists a pair from $T_t^+$ (that is, from the second column and a regular row)
with the same sum $2t \mmod p$. We have
$(x_i+t)+(y_i+t)=2t\mmod p$. Hence $x_i+y_i=0\mmod p$, ---
 but this is forbidden by the definition of a strong starter.
\end{proof}

\begin{lemma}[Two zero sums at most]
\label{lem:weak1}
If $T$ is a strong starter, then in $\Sigma_p$ obtained by Construction
there are no more than two weak pairs with sum $0\mmod p$ (0-weak pairs): $|W_0|\le 2$.
\end{lemma}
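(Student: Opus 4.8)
The plan is to bound the number of $0$-weak pairs by examining, column by column, which pairs in the triplication table $\Sigma_p$ can have sum $\equiv 0 \pmod p$, and then to argue that no two such candidates can coexist across different columns. By Lemma~\ref{lem:weak23} we already know every weak set has size at most $3$, but $W_0$ is special because its defining property ($s=0$) makes it a weak set even when it has size $1$, so we must count its membership directly rather than relying on size bounds.

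First I would observe that within each single column, two pairs cannot both have sum $0$: this repeats the argument of Lemma~\ref{lem:weak23}. In column~1 the pairs are $(x_i,y_i)$ from the starter $T$, and two of them summing to $0$ would contradict strongness of $T$. In columns~2 and~3 the sums are $2t + (x_i+y_i)$ and $2t - (x_i+y_i)$ respectively, so again two coincidences in one column would force $x_i + y_i \equiv x_j + y_j$, impossible since $T$ is strong. Hence each column contributes at most one $0$-weak pair, and the top pair $(t,t)$ has sum $2t$, which is $0$ only in the degenerate case $t=0$ (note $\gcd(p,2)=1$, so $2t\equiv 0$ forces $t\equiv 0$).

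The main obstacle, and the real content of the lemma, is to rule out $0$-weak pairs appearing in \emph{two different} columns simultaneously, which is what would be needed for $|W_0| \ge 3$. Here I would compare columns~2 and~3: a $0$-sum in column~2 means $x_i + y_i \equiv -2t$, while a $0$-sum in column~3 means $x_j + y_j \equiv 2t$. These could both occur for distinct $i,j$, so the cross-column comparisons do \emph{not} by themselves eliminate having one $0$-weak pair in column~2 and another in column~3. This suggests the bound of $2$ is genuinely attainable and that the lemma's claim $|W_0|\le 2$ is the correct and tight statement: at most one $0$-weak pair per relevant column-pairing, but never three at once.

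To close the argument I would therefore tabulate the possible $0$-sum conditions as linear constraints on $t$ and the quantity $x_i + y_i$, and show that no choice of indices produces three simultaneous solutions. Concretely, the candidate sums are $2t$ (top), $x_i+y_i$ (column~1), $2t + (x_i+y_i)$ (column~2), and $2t - (x_i+y_i)$ (column~3); I would check that among these, requiring three to vanish forces either a repeated value of $x_i+y_i$ (contradicting strongness of $T$, via the set $\hat T$ of distinct pair-sums) or $t\equiv 0$ combined with a $0 \in \hat T$ (contradicting the definition of a strong starter, since $0 \notin \hat T$). The delicate case to handle carefully is the interaction of the top pair with columns~2 and~3 when $t \equiv 0$, which is exactly where strongness of $T$ (specifically $0 \notin \hat{T}$) must be invoked to finish.
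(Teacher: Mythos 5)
Your proposal is correct and follows essentially the same route as the paper's own proof: a column-by-column analysis where strongness of $T$ (distinct, nonzero pair sums) limits column~1 to zero $0$-weak pairs and columns~2 and~3 to at most one each (via the conditions $x_i+y_i\equiv -2t$ and $x_j+y_j\equiv 2t \pmod p$), with the top pair relevant only when $t=0$, in which case $0\notin\hat T$ excludes columns~2 and~3 entirely. Your closing tabulation of the candidate sums $2t$, $x_i+y_i$, $2t\pm(x_i+y_i)$ is just a reorganization of the paper's case split $t=0$ versus $t\neq 0$, and correctly identifies that the bound $2$ is attained only in the latter case.
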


\begin{proof}
The pair sums in  the first column, $T$, of the table $\Sigma_p$ are nonzero by the definition of a strong starter.

If there is a 0-weak pair in a regular row $i$ of the second column,
then $0=\sigma_{3i-1}=\sigma_{3i-2}+2t\mmod p$. Hence $\sigma_{3i-2}=-2t$. 

If there is a 0-weak pair in a regular row $j$ of the third column,
then $0=\sigma_{3j}=2t-\sigma_{3j-2}\mmod p$. Hence $\sigma_{3j-2}=2t$. 

Since all pair sums in $T$ are nonzero and distinct, in the case $t\neq 0$ there is at most one
0-weak pair in the second column and at most one in the third column. 

If $t=0$, then the only 0-weak pair in the table is the top pair $(0,0)$.
\end{proof}

\noindent
{\bf Example} (continued) 
    The following table shows weak sets in the triplication table given at the end of Section~\ref{sec:sigma_p}.
	
	\bigskip

    \centerline{
	\begin{tabular}{|l|c|c|c|c|}
		\hline
		Cell indexes & 2 & 4, 9 & 1, 5& 6, 7\\
		\hline
		Weak set & $\{(3,4)\}$ & $\{(4,6),\ (3,0)\}$ & $\{(2,3),\ (5,0)\}$ & $\{(2,4),\ (1,5)\}$ \\
		\hline
		Sum $\mmod 7$ & 0 & 3 & 5 & 6\\
		\hline
		Type & 1 & 2 & 2 & 2\\
		\hline
	\end{tabular}
 }	

\section{Monochrome sets}    
\label{sec:color}

In this section we work not with pairs from $\Sigma_p$ but with individual
numerical entries. We think of their values as of colors.

In Section~\ref{sec:sigma_p} we introduced the notation $u_i$, $v_i$ for the entries of tuple $\Sigma_p$.
It will be convenient to indicate the position of each entry by two numbers written in the angle brackets: $\ab{i,\ell}$, where $0\le i\le q$ is the index of the pair in $\Sigma_p$ and $\ell =0,1$ is the position of the entry within the pair:
$$
\ab{i,0}\;\; {\rm for}\,\, u_i, \qquad \ab{i,1}\;\; {\rm for} \,\,v_i,\quad 0\le i\le q.
$$

\begin{definition}
For $c\in\{0,1,\dots,p-1\}$, the {\em monochrome set}\ of color $c$ is the set of positions
$\ab{i,\ell}$  of entries with value $c$ in $\Sigma_p$. We denote this set by $M_c$.

\end{definition}

\begin{lemma}
\label{lem:colorsets}
The monochrome set of color $c\neq 0$ has cardinality $3$. The monochrome set of color 0
has cardinality $2$. That is, $$|M_c|=\begin {cases} 3\;,c\ne 0,\\ 2,\;c=0.\end{cases}$$
\end{lemma}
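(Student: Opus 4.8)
The plan is to count, for each color $c\in\{0,1,\dots,p-1\}$, how many times the value $c$ occurs among all the numerical entries of the table $\Sigma_p$, since $|M_c|$ is exactly that number of occurrences. The table has $3q+1$ pairs, hence $2(3q+1)=6q+2$ entries in total. Each of the $q$ regular rows contains six entries, namely $x_i,y_i,t+x_i,t+y_i,t-y_i,t-x_i\pmod p$, and the top pair contributes the two entries $t,t$. Since $\gcd(3,p)=1$ and $T$ is a starter of order $p$, the first column $[(x_i,y_i)]$ together ranges over all of $\ZZ_p^*=\{1,\dots,p-1\}$, each nonzero value occurring exactly once (a starter partitions $G^*$ into pairs, so the multiset of all its entries is precisely $\ZZ_p^*$). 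This is the structural fact I would lean on.

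\medskip
\noindent\textbf{First step: the second and third columns are translates of the first.}
I would observe that the entries of the second column, $\{t+x_i,\ t+y_i\}$ over all $i$, form the image of $\ZZ_p^*$ under the bijection $z\mapsto t+z$ of $\ZZ_p$; hence as a multiset they equal $\{t+z:z\in\ZZ_p^*\}=\ZZ_p\setminus\{t\}$, each such value occurring exactly once. Similarly the third column entries $\{t-y_i,\ t-x_i\}$ are the image of $\ZZ_p^*$ under $z\mapsto t-z$, another bijection, so as a multiset they again equal $\ZZ_p\setminus\{t\}$, each value once. Thus, ignoring the top pair for the moment, the combined multiset of entries from columns 1, 2, 3 is
$$
\ZZ_p^*\ \uplus\ (\ZZ_p\setminus\{t\})\ \uplus\ (\ZZ_p\setminus\{t\}).
$$

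\medskip
\noindent\textbf{Second step: add the top pair and count by color.}
Now I would fold in the top pair, which adds the value $t$ twice. The total multiset of entries is therefore
$$
\ZZ_p^*\ \uplus\ (\ZZ_p\setminus\{t\})\ \uplus\ (\ZZ_p\setminus\{t\})\ \uplus\ \{t,t\}.
$$
A value $c$ occurs once in $\ZZ_p^*$ iff $c\ne 0$; it occurs once in each copy of $\ZZ_p\setminus\{t\}$ iff $c\ne t$; and it occurs twice among the top-pair entries iff $c=t$. Summing these contributions I would carry out a short case check on whether $c=0$ and whether $c=t$. Generically ($c\ne 0$, $c\ne t$) the count is $1+1+1=3$; for $c=t$ (with $t\ne 0$) it is $1+0+0+2=3$; and for $c=0$ (with $t\ne 0$, giving $c\ne t$) it is $0+1+1=2$. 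When $t=0$ the two special colors coincide and the count for $c=0$ is $0+0+0+2=2$ while every $c\ne 0$ still gives $3$. In every case the count is $3$ for $c\ne 0$ and $2$ for $c=0$, which is the claim.

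\medskip
\noindent The main obstacle, such as it is, lies in the bookkeeping for the second step rather than in any deep idea: one must be careful that the two genuinely distinct arithmetic descriptions of a color's multiplicity (the generic value versus the exceptional values $c=0$ and $c=t$) all collapse to the same final answer, and in particular that the degenerate case $t=0$ — where the otherwise-separate exceptional colors merge — is handled so that $|M_0|=2$ persists rather than jumping to $0$. I would therefore present the count as a single multiset identity and then verify the two cases $c=0$ and $c\ne 0$ explicitly, making sure the $t=0$ subcase is not silently assumed away.
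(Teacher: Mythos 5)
Your proof is correct, and it is essentially the paper's own argument: the paper merely states that the lemma ``easily follows from Construction'' with the cases $t=0$ and $t\neq 0$ treated separately, and your multiset count — columns two and three being bijective translates $z\mapsto t\pm z$ of $\ZZ_p^*$, plus the top pair contributing $t$ twice — is exactly the fleshed-out version of that sketch, with the case analysis on $c=0$, $c=t$, and $t=0$ handled carefully.
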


 Proof easily follows from Construction; the cases $t=0$ and $t\neq 0$ need to be treated separately.

\medskip
This Lemma shows that most monochrome sets are triples. The two-element
set of color $0$ is a ``degenerate triple'', but later we will artificially complete it to a full triple.

\bigskip
\noindent
{\bf Example} (continued) 
    The following table lists all monochrome triples in the triplication table given at the end of Section~\ref{sec:sigma_p}.
    For instance, the value 0 appears in pairs 5 and 9, both times in the second position  ($v_5=v_9=0$), which yields the set of color zero: $\{\ab{5,1},\,\ab{9,1}\,\}$. 
	
	\bigskip
	
\centerline{
    \begin{tabular}{|c|c|c|}
		\hline
		color & monochrome sets of positions $\ab{i,\ell}$& variables\\
		\hline
		0& $\ab{5,1}$, $\ab{9,1}$ & $v_5, v_9$\\
		\hline
		1& $\ab{0,0}$, $\ab{0,1}$, $\ab{7,0}$ & $u_0, v_0, u_7$\\
		\hline
		2& $\ab{1,0}$, $\ab{6,0}$, $\ab{8,0}$  &$u_1, u_6, u_8$\\
		\hline
		3& $\ab{1,1}$, $\ab{2,0}$, $\ab{9,0}$ &$v_1, u_2, u_9$\\
		\hline
		4& $\ab{2,1}$, $\ab{4,0}$, $\ab{6,1}$ &   $v_2, u_4, v_6$\\
		\hline
		5& $\ab{3,0}$, $\ab{5,0}$, $\ab{7,1}$ & $u_3, u_5, v_7$\\
		\hline
		6& $\ab{3,1}$, $\ab{4,1}$, $\ab{8,1}$ & $v_3, v_4, v_8$\\
		\hline
	\end{tabular}
}

\section{Setting up Sudoku-type problem modulo 3} 
\label{sec:sigma_3}

The tuple of pairs $\Sigma_p=\{(u_i,v_i)\}_{i=0}^{3q}$ is determined by Construction described in
Section~\ref{sec:sigma_p}. 
For given numerical dataset (the base starter $T$ and key $t$), its elements are integer numbers modulo $p$.

We introduce a tuple of pairs $\Sigma_3=\{(U_i,V_i)\}_{i=0}^{3q}$. Its elements are thus far 
indeterminates. Our task will be to find their numerical values as {\bf integers modulo 3}.

Hence the initial, by-definition constraint:
\begin{enumerate}
\item[(0)]
{\bf Range constraints}.

 All variables $U_i, V_i$ take values in the set $\{0,1,2\}$.
\end{enumerate}

Let us arrange the set of unknowns $\{U_i,V_i\}$ in a table format as in the triplication table $\Sigma_p$. The so obtained arrangement will be referred to as the {\em table $\Sigma_3$}.

\medskip
Three classes of constraints are imposed on the entries of the table $\Sigma_3$:

\begin{enumerate}
\item[(1)]
{\bf Row Constraints}.
We require that the differences $D_i=U_i-V_i$  in each regular row of the table be distinct $\mod 3$, that is,
$$
\{D_{3i-2}, D_{3i-1}, D_{3i}\}=\{0,1,2\}, \quad i=1,2,\dots,q.
$$

\item[(2)]
{\bf Weak Set Constraints}.
Every weak set of the triplication table $\Sigma_p$ gives rise
to one constraint.

Let $\{(u_i,v_i)\}$ be the set of pairs (with the common value of the sums $\sigma_i=s$) comprising the given weak set $W_s$. We denote the corresponding
set of indexes $i$ by $\ind(W_s)$. 

The form of the constraint is slightly different for $W_s$ with $s=0$ and $s\ne 0$.

\begin{enumerate}

\item[(a)] In the case $s\neq 0$, it is required that the sums  
$\{U_i+V_i,\, i\in\ind(W_s)\}$, be distinct. Since there are 3 distinct values modulo 3, this requirement by itself
is not forbidding. A similar requirements involving a set of 
cardinality greater than 3
would be forbidding, but there is no such a weak set (Lemma~\ref{lem:weak23}).

\item[(b)] In the case $s=0$, it is required that the sums $\{U_i+V_i,\ i\in \ind(W_0)\}$ be distinct and nonzero. This requirement by itself is not forbidding since there are two distinct nonzero values,
 $1$ and $2$  modulo 3 and $|W_0|\leq 2$ by Lemma~\ref{lem:weak1}.
 
\end{enumerate}

\item[(3)]
{\bf Color Triple Constraints}.
We augment the two-element monochrome set $M_0$ of color 0 by a dummy double index $\ab{*,*}$.
We append a dummy variable $Z=0$ to the set of unknowns $\{U_i,V_i\}$ and declare
that the index (position) of $Z$ in the table $\Sigma_3$ is $\ab{*,*}$.

To each color $0\le c\le p-1$ there corresponds one constraint: the values $\pmod 3$ of the variables in positions $\ab{i,\ell}\in M_c$
must be distinct. 

The constraint corresponding to color 0, formulated without dummies,
says that the two variables  in positions $\ab{i,\ell}\in M_0$ 
must take
distinct values $1$ and $2$.

\end{enumerate}

\section{Theoretical results}
\label{sec:theorems}

\begin{theorem}[Solution of Sudoku yields a strong starter]
\label{thm:sudoku_to_starter}
Let $\Sigma_p=\{u_i,v_i\}_{i=0}^{3q}$ be a tuple obtained by Construction from a strong starter $T$ of order $p=2q+1$. 
 Suppose $\{U_i,V_i\}_{i=0}^{3q}$ is the set of values $\mmod 3$
 satisfying the constraints {\rm (1)--(3)} 
 of Section~\ref{sec:sigma_3}
 and associated with $\Sigma_p$.   

For $i=0,\dots,3q$, let $a_i$, $b_i$ be the values $\mod 3p$ satisfying the congruences
$a_i\equiv u_i\mmod p$, $a_i\equiv U_i\mmod 3$, $b_i\equiv v_i\mmod p$, $b_i\equiv V_i\mmod 3$.
(The solutions of the congruences are determined by CRT.)
 Then the set of pairs $S=\{\{a_i,b_i\}\}_{i=0}^{3q}$ is a strong starter in $\ZZ_{3p}$.
\end{theorem}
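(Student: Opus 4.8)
The backbone of the argument is the ring isomorphism furnished by CRT,
$$
\phi:\ZZ_{3p}\ \xrightarrow{\ \sim\ }\ \ZZ_p\times\ZZ_3,\qquad x\mapsto(x\bmod p,\,x\bmod 3),
$$
which is a bijection because $\gcd(p,3)=1$. Under $\phi$ the element $a_i$ corresponds to the coordinate pair $(u_i,U_i)$ and $b_i$ to $(v_i,V_i)$. Every quantity appearing in the definition of a strong starter --- the entries themselves, the differences $a_i-b_i$, and the sums $a_i+b_i$ --- therefore splits into a ``mod $p$ part'', governed by the triplication table $\Sigma_p$ and the base starter $T$, and a ``mod $3$ part'', governed by the Sudoku unknowns. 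The plan is to verify the three defining properties of a strong starter of order $3p$ one at a time, each time reducing the claim to a statement about a single coordinate and invoking the matching class of constraints from Section~\ref{sec:sigma_3}.

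\emph{Partition of $\ZZ_{3p}^*$.} There are $2(3q+1)=3p-1=|\ZZ_{3p}^*|$ entries in total, so it suffices to show they are all nonzero and pairwise distinct in $\ZZ_{3p}$. An entry vanishes modulo $3p$ exactly when it vanishes in both coordinates. The positions where $\Sigma_p$ vanishes modulo $p$ form the monochrome set $M_0$, of cardinality $2$ by Lemma~\ref{lem:colorsets}; Constraint~(3) for color $0$ forces the two corresponding mod-$3$ values to be $1$ and $2$, hence nonzero, so no entry of $S$ is $0$. For distinctness, two entries agree modulo $3p$ iff they agree in each coordinate; agreement modulo $p$ means the two positions carry the same color $c$, i.e.\ lie in the same $M_c$, and Constraint~(3) then makes their mod-$3$ values distinct, while disagreement modulo $p$ already separates them. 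Hence $S$ is a partition of $\ZZ_{3p}^*$ into $\tfrac{3p-1}{2}$ pairs.

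\emph{Difference (starter) property.} The signed differences $\pm(a_i-b_i)$ number $3p-1=|\ZZ_{3p}^*|$, so it is enough to prove them pairwise distinct; I would group them by their mod-$p$ coordinate. Using Lemma~\ref{lem:rowdifs} and its Corollary together with the fact that $T$ is a starter (so $\{\pm d_r\}_{r=1}^q$ partitions $\ZZ_p^*$), each nonzero residue $c\bmod p$ occurs as the mod-$p$ coordinate of exactly three signed differences, all arising from one regular row and with one sign. The Row Constraint~(1) makes the three mod-$3$ coordinates of those differences equal to $\{0,1,2\}$ (and negation merely permutes this set), so they are distinct. The residue $0\bmod p$ occurs only for $\pm(a_0-b_0)$ coming from the top pair, with mod-$3$ coordinates $\pm D_0$; since the two entries of the top pair share a single monochrome set, Constraint~(3) gives $D_0\neq 0$, whence $\pm D_0$ are distinct. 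Thus all $3p-1$ signed differences are distinct and nonzero, and hence exhaust $\ZZ_{3p}^*$.

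\emph{Sum (strong) property, and the main difficulty.} Here I must show the $3q+1=\tfrac{3p-1}{2}$ sums $a_i+b_i$ are nonzero and pairwise distinct. A sum can vanish modulo $3p$ only when its mod-$p$ part $\sigma_i$ vanishes, i.e.\ when $(u_i,v_i)\in W_0$, and Weak Set Constraint~(2b) then forces the mod-$3$ sum to be nonzero; if $\sigma_i\neq0$ the sum is already nonzero modulo $p$. Two sums coincide modulo $3p$ only if $\sigma_i=\sigma_j$ for some $i\neq j$, in which case both pairs lie in the weak set $W_{\sigma_i}$ (which then has at least two elements, hence qualifies as weak), and Constraint~(2) forces their mod-$3$ sums to differ. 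The partition and sum properties are largely mechanical once the coordinate splitting is set up; I expect the genuine bookkeeping obstacle to be the difference property --- specifically, the claim that each nonzero residue modulo $p$ is the mod-$p$ coordinate of \emph{exactly} three signed differences coming from a single row and a single sign. Establishing this cleanly requires threading together the Row property (Lemma~\ref{lem:rowdifs}), its Corollary, the starter axiom for $T$, and the Row Constraint, and it is the one place where a careless count could silently double-count or miss the interaction between a row difference $d_r$ and its negative.
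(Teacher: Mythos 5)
Your proof is correct and follows essentially the same route as the paper's: split everything into mod-$p$ and mod-$3$ coordinates via CRT, then let the Color Constraints deliver the partition property, the Row Constraints (plus the color-$t$ triple for the top pair, forcing $D_0\neq 0$) deliver the difference property, and the Weak Set Constraints deliver the sum property. If anything, your write-up is slightly more explicit than the paper's at two points --- verifying that no entry of $S$ is zero via the monochrome set $M_0$, and the residue-by-residue count showing each nonzero $c \bmod p$ carries exactly three signed differences from a single row and sign --- but the underlying argument is identical.
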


\begin{proof}
The constructed pairing $S$ has $3q+1$ pairs. Since the starter $T$ is of order $p=2q+1$,   
the order of $S$ is $n=2(3q+1)+1=6p+3=3(2q+1)=3p$. 

\smallskip
We have to check that:

\begin{enumerate}
    
\item[(a)] $S$ is a partition of the set $\ZZ_{3p}^*$;

\item[(b)] The set of differences of pairs in $S$, 
$\{\pm(a_i-b_i)\mmod 3p\}_{i=0}^{3q}$, coincides with $\ZZ_{3p}^*$.

\item[(c)] The pair sums $\{a_i+b_i\}_{i=0}^{3q}$ are all nonzero and distinct.
\end{enumerate}

Property (a) follows from Color Constraints: any two values from the list $a_0,b_0,\dots,a_{3q},b_{3q}$
either are different modulo $p$ or, if not, --- they are different modulo 3. 

\smallskip
Property (b) is ensured by Row Constraints with a little help from Color Constraints. 
For pairs in regular rows, we have
$a_i-b_i\equiv d_i\mmod p$, $a_i-b_i\equiv D_i\mmod 3$. By our set-up, since $T$ is a starter,
the pairs $(d_i,D_i)$ are all distinct, $d_i\not\equiv 0$, and $d_i\not\equiv -d_j \mmod p $ for all $i,j$.
It remains to look at the difference $a_0-b_0$. We have $a_0-b_0\equiv t-t=0\mmod p$, $a_0-b_0\equiv D_0
\neq 0\mmod 3$, since the positions $\ab{0,0}$ and $\ab{0,1}$ belong to the monochrome triple of color $t$.

\smallskip
Property (c) follows from Weak Set Constraints. They imply that the pairs
$(\,(a_i+b_i)\mmod p,\, (a_i+b_i)\mmod 3\,)$ are all distinct, hence the sums $(a_i+b_i)\mmod 3p$
are all distinct.

It remains to check that there are no zero sums $\mmod 3p$. 
But if $a_i+b_i\equiv 0\mmod p$, then the pair $(u_i,v_i)$ in $\Sigma_p$ is weak.
By item (b) in the description of Weak Set Constraints, the corresponding sum $(U_i+V_i)\mmod 3\in\{1,2\}$,
hence $(a_i+b_i)\mmod 3p\neq 0$.
\end{proof}

\begin{theorem}[Necessary condition for Sudoku solvability] 
\label{thm:cond-key}
If the value of key $t$ in Construction of Sec.~\ref{sec:sigma_p} is either 0 or one of the pair
sums $x_i+y_i$ of the base starter $T$ (not necessarily strong), then the modular Sudoku problem does not have a solution.
In other words, the condition $t\notin\hat T\cup\{0\}$ is necessary for solvability of the
modular Sudoku problem.
\end{theorem}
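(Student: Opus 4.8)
The plan is to show, in each of the two forbidden cases, that a small cluster of cells in the table $\Sigma_3$ is subjected to constraints that are jointly unsatisfiable modulo $3$. I would split the argument according to whether $t=0$ or $t=x_j+y_j$ for some pair $(x_j,y_j)$ of $T$, and in each case isolate a few entries on which the Row, Weak Set and Color constraints of Section~\ref{sec:sigma_3} collide. Since the stated condition allows $T$ to be merely a starter, I would keep an eye on the fact that the bookkeeping below never invokes strongness of $T$.

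First I would dispose of the case $t=0$. Here the top pair of $\Sigma_p$ is $(t,t)=(0,0)$, and $0$ occurs nowhere else in the table: the first column is a partition of $\ZZ_p^*$, the second column equals the first (since $T_0^+=[(x_i,y_i)]$), and the third column $[(-y_i,-x_i)]$ has only nonzero entries. Hence the monochrome set $M_0$ is exactly $\{\ab{0,0},\,\ab{0,1}\}$, and the Color Triple Constraint for colour $0$ forces $\{U_0,V_0\}=\{1,2\}$, so that $U_0+V_0\equiv 0\mmod 3$. On the other hand the top pair has sum $\sigma_0=0$, so it is a $0$-weak pair, and the Weak Set Constraint for $s=0$ demands $U_0+V_0\not\equiv 0\mmod 3$. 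These two requirements contradict each other, so no solution exists when $t=0$.

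The decisive observation for $t=x_j+y_j$ is that the Construction then makes the third-column pair of row $j$ coincide with its first-column pair: $(t-y_j,\,t-x_j)=(x_j,y_j)$. Writing $a=U_{3j-2},\,b=V_{3j-2}$ for the first-column cell and $c=U_{3j},\,d=V_{3j}$ for the third-column cell, four constraints act on this quadruple. The entries $x_j$ and $y_j$ each now appear in columns $1$ and $3$, so the Color Triple Constraint gives $a\neq c$ and $b\neq d$; the Row Constraint for the regular row $j$ gives $a-b\neq c-d$; and since the two identical pairs share the sum $t$ they lie in a common weak set, so the Weak Set Constraint gives $a+b\neq c+d$. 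Setting $\alpha=a-c$ and $\beta=b-d$, these four inequalities say that $\alpha,\,\beta,\,\alpha-\beta,\,\alpha+\beta$ are all nonzero modulo $3$; but for $\alpha,\beta\in\{1,2\}$ one checks that $\alpha=\beta$ forces $\alpha-\beta\equiv 0$ while $\alpha\neq\beta$ forces $\{\alpha,\beta\}=\{1,2\}$ and hence $\alpha+\beta\equiv 0\mmod 3$. The constraints are therefore jointly unsatisfiable, and again no solution exists.

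I expect the main work to be the recognition of the right cluster of cells rather than the closing arithmetic. The genuinely load-bearing step is the coincidence $(t-y_j,t-x_j)=(x_j,y_j)$, which converts the hypothesis $t\in\hat{T}$ into a pair of literally repeated cells; once that is seen, the three constraint families automatically pile four incompatible conditions onto four unknowns and the mod-$3$ infeasibility is immediate. The only points deserving a second look are that the repeated cells of colour $x_j$ (resp.\ $y_j$) are genuinely \emph{distinct} positions carrying the \emph{same} colour, i.e.\ that $x_j\neq y_j$ and both are nonzero --- both immediate from $T$ being a starter --- and that, as noted above, each of these verifications survives the weakening of $T$ from strong to arbitrary, which is what licenses asserting the necessary condition for $T$ not necessarily strong.
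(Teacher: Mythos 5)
Your proof is correct. For the case $t\in\hat T$ it is essentially the paper's own argument: the coincidence $(t-y_j,\,t-x_j)=(x_j,y_j)$ creates two identical cells in one regular row, and the Row, Weak Set and Color constraints on those two cells are jointly infeasible modulo~3 --- your $\alpha,\beta$ computation is the same closing step that the paper phrases as ``$U-U'\not\equiv\pm(V-V')$, hence $U=U'$ or $V=V'$, violating the color constraints.'' Where you genuinely diverge is the case $t=0$. The paper notes that for $t=0$ the second column duplicates the first, so \emph{every} regular row contains a repeated cell, and then applies the same infeasibility argument; thus one forbidden configuration (``a repeated pair in a row'') settles both cases uniformly. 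You instead argue locally at the top cell: for $t=0$ the monochrome set $M_0$ is exactly $\{\ab{0,0},\ab{0,1}\}$, so the color-0 constraint forces $\{U_0,V_0\}=\{1,2\}$ and hence $U_0+V_0\equiv 0 \pmod 3$, contradicting the Weak Set Constraint (b) for the $0$-weak top pair. This is sound (and, like the paper's proof, nowhere uses strongness of $T$); it buys locality --- the contradiction for $t=0$ lives in a single cell and uses only two of the three constraint families --- at the cost of the paper's uniformity. One pedantic point common to both proofs: applying the weak-set constraint to two \emph{identical} pairs requires reading $W_s$ as an indexed family (via $\ind(W_s)$) rather than as a literal set of value-pairs; this is clearly the paper's intended convention, so it is not a gap.
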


\begin{proof}
In the case $t=0$, we have two equal pairs in every regular row of the triplication table:
$(u_{3i-2},v_{3i-2})=(u_{3i-1},v_{3i-1})$ (both pairs coincide with pair $(x_i,y_i)$ of the base starter $T$).

In the case where $(x_i+y_i)\mmod p = t$ for some $i$, we again have two equal pairs in one of the rows ($i$-th)
of the triplication table: $(u_{3i-2},v_{3i-2})=(u_{3i},v_{3i})$.
Indeed, calculating in $\ZZ_p$, we get $u_{3i}=t-y_i=(x_i+y_i)-y_i=x_i=u_{3i-2}$ and similarly $v_{3i}=y_i=v_{3i-2}$. 

It remains to prove that the existence of two identical pairs $(u,v)=(u',v')$
in the same row of the table $\Sigma_p$
is forbidding for the existence of solution of the modular Sudoku problem.

The elements of pairs $(U,V)$ and $(U',V')$ of the table $\Sigma_3$ must satisfy the constraints:
\\
(1) $U-V\not\equiv U'-V' \mmod 3$ (part of Row Constraints);
\\
(2) $U+V\not\equiv U'+V'\mmod 3$ (part of Weak Set Constraints);
\\
(3) $U\neq U'$, $V\neq V'$ (by Color Constraints: the positions of the elements $U$, $U'$ belong to the same monochrome set of  color $u$
and the positions of the elements $V$, $V'$ belong to the same monochrome set of  color $v$).

The constraints (1) and (2) imply: $U-U'\not\equiv \pm(V-V')\mmod 3$. 
Thus, if $V-V'\not\equiv 0\mmod 3$, then $(U-U')\mmod 3\notin\{1,2\}$, hence $(U-U')\mmod 3=0$.

We see that either $U=U'$ or $V=V'$. Either case violates one of the constraints (3). 
\end{proof}

\noindent
In all our test examples (see Sec.~\ref{sec:computations}) where the necessary condition of Theorem~\ref{thm:cond-key} was met,
the modular Sudoku problem did have a solution. 

Therefore we propose

\medskip
\noindent
{\bf Conjecture}. {\em If $T$ is a strong starter of odd order $p\geq 7$ coprime with 3 and $t\in\ZZ_n^*\setminus \hat{T}$, where $\hat T$ is the set of pair sums of $T$, then the modular Sudoku problem corresponding to the data $(T,t)$ has a solution.}

\begin{theorem}[Solutions of Sudoku come in pairs] 
\label{thm:sudoku-symmetry}
Let $\phi$ be a permutation of the set $\{0,1,2\}$ that transposes $1$ and $2$. 
If the set of values $\{U_i,V_i\}$ satisfies the constraints of the modular Sudoku problem,
then the set of values $\{\phi(U_i),\phi(V_i)\,\}$ also satisfies those constraints.
\end{theorem}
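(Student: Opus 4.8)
The plan is to recognize that the permutation $\phi$ is nothing but negation modulo 3. Indeed, $\phi$ fixes $0$ and swaps $1$ and $2$, which is exactly the action of $x\mapsto -x\pmod 3$ (since $-1\equiv 2$ and $-2\equiv 1\pmod 3$). The crucial consequence I would record first is that $\phi$ is an automorphism of the additive group $\ZZ_3$: for any $a,b\in\{0,1,2\}$ one has $\phi(a)+\phi(b)\equiv -(a+b)$ and $\phi(a)-\phi(b)\equiv -(a-b)\pmod 3$. This identity is the engine of the whole argument, because every constraint of the modular Sudoku problem is phrased purely in terms of sums and differences of the unknowns, and the identity says that applying $\phi$ entrywise simply negates all those sums and differences. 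Moreover $\phi$ itself is a bijection of $\{0,1,2\}$ fixing $0$, so it preserves both ``distinctness of a collection of values'' and ``being nonzero''.

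With this in hand, I would verify the four constraint classes one by one. The Range constraints (0) are immediate, since $\phi$ permutes $\{0,1,2\}$. For the Row constraints (1), writing $D_i'=\phi(U_i)-\phi(V_i)=-D_i$, the transformed set of differences in the $i$-th regular row is $\{-D_{3i-2},-D_{3i-1},-D_{3i}\}$; as the original set equals $\{0,1,2\}$ and negation is a bijection of $\{0,1,2\}$, the negated set is again $\{0,1,2\}$. For the Weak Set constraints (2), the transformed sums are $-(U_i+V_i)$, so distinctness within each index set $\ind(W_s)$ survives because negation is injective; in the case $s=0$ I would additionally note that $-(U_i+V_i)\equiv 0$ holds iff $U_i+V_i\equiv 0$, so the nonvanishing requirement is preserved as well.

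For the Color Triple constraints (3), the entries occupying the positions of a monochrome set $M_c$ are replaced by their $\phi$-images, and their distinctness survives because $\phi$ is injective. The special color-$0$ requirement (that the two genuine entries take the distinct values $1$ and $2$) is preserved because $\phi$ merely interchanges $1$ and $2$, while the dummy variable $Z=0$ is a fixed point of $\phi$ and so stays in place. I anticipate no genuine obstacle once $\phi$ is identified as negation: each class then reduces to a one-line check. The only point that demands a moment's care is that $\phi$ is \emph{not} a ring automorphism of $\ZZ_3$ (it does not respect multiplication), so the reasoning must rely exclusively on the additive identities displayed above; since every Sudoku constraint is purely additive, this is entirely sufficient.
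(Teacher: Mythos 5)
Your proof is correct and follows essentially the same route as the paper: the paper's proof is the one-line observation that every constraint is invariant under the transposition of $1$ and $2$, and its subsequent Remark makes exactly your key identification $\phi(X)=-X \mmod 3$. Your write-up simply carries out the constraint-by-constraint verification that the paper leaves as ``straightforward.''
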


\begin{proof}
It is straightforward to see that every constraint is invariant under transposition of values $1$ and $2$.
\end{proof}

\noindent{\bf Remark}.
The permutation $\phi$ is an automorphism of the group $\ZZ_3$: $\phi(X)=-X\mmod 3$. 
Moreover, the map $(a\mmod p, \;A\mmod 3)\mapsto (a\mmod p, \;\phi(A\mmod 3))$ is an automorphism of the group
$\ZZ_{3p}\cong\ZZ_p\oplus\ZZ_3$ that fixes the direct summand $\ZZ_p$.
From this point of view, Theorem~\ref{thm:sudoku-symmetry} is a simple consequence of a well-known partition of starters into equivalent classes (see e.g.\ Introduction in \cite{Vesa10}). 

\medskip

Let us briefly address the question mentioned in Section~\ref{sec:example}, item 8.

\medskip\noindent
{\bf Question.} (Inverse problem of triplication.) 
{\em Given a strong starter $S=\{\{a_i,b_i\}\}_{i=0}^{3q}$, determine whether $S$ can be obtained from 
some pair $(T,t)$, where $T$ is a strong starter of order $p=2q+1$, $t\in\ZZ^*_{p}\setminus \hat T$, by triplication.}

\medskip

We offer a partial answer --- a sufficient condition that $S$ is not a result of triplication.
The condition is based solely on the row structure of the table $\Sigma_p$.%
\footnote{To be appreciated by Cindy of exercise (d), Sec.~\ref{sec:example}.}%

It is convenient to formulate the condition in the form of an algorithm with two return values: {\em False} (meaning that $S$ is not a result of triplication) and {\em Inconclusive}.

\smallskip
Step 1. Reduce the given starter $S$ modulo $p$; denote the result by $S\mmod p$.

\smallskip
Step 2. For each $d\in\{0,\dots, q\}$ identify the set $R_d$ (``row with difference $d$'')
of {\em ordered}\ pairs $(a,b)$ such that $\{a,b\}\in S\mmod p$ and $(a-b)\mmod p = d$.
Since $S$ is a starter, we have $|R_0|=1$ and $|R_d|=3$ for $d\neq 0$.

\smallskip
Step 3. $R_0=\{(t,t)\}$ is a singleton, call $t$ the ``key'' and continue. 

\smallskip
Step 4. For every set $R_d$, $d\neq 0$, check 
whether 
a certain ordering of $R_d$
is a valid row of a triplication table, that is, has the form
$$
(u,v), \; (u',v'), \; (u'',v'')
$$
with
$$
u'+v''\equiv v'+u''\equiv 2t\pmod p, \qquad u'-u \equiv v'-v \equiv t\pmod p.
$$
For that purpose, examine all 6 permutations of the pairs in $R_d$. 
If no permutation satisfies the stated property, stop and return {\em False}. 

\smallskip
Step 5. If all rows $R_d$, $d=1,\dots,q$, pass the test of Step 4, return {\em Inconclusive}.

\medskip
The proposed test is simple yet practical; see Discussion at the end of Sec.~\ref{sec:computations}. 

\medskip\noindent
{\bf Example 1}. Consider the starter $S$ of order 21:
  $$
  \{\{13, 12\}, \{19, 17\}, \{7, 4\}, \{10, 14\}, \{15, 20\}, \{3, 9\}, \{1, 8\}, \{5,18\}, \{11, 2\}, \{16, 6\}\}.
  $$

The sets $R_d$ modulo 7 relevant for our purposes are:
$R_0=\{(1,1)\}$ (comes from pair $\{1,8\}$ and determines the key value $t=1$) 
and $R_3=\{(0,4),(3,0),(2,6)\}$
(comes from pairs $\{7,4\}$, $\{10,14\}$, $\{16,6\}$).

The set $R_3$ fails the test of Step 4. Hence $S$ is not obtainable by triplication.

\medskip\noindent
{\bf Example 2}. Here we illustrate a possibility that $S$ can be obtained by triplication from a starter but not from a strong starter.

Consider the strong starter $S$ of order 39:
$$
\begin{array}{c}    
\{\{2, 1\}, \{36, 34\}, \{6, 3\}, \{8, 12\}, \{38, 33\}, \{16, 10\}, \{18, 11\}, \{29, 21\}, 
\{22, 31\},  \{25,
\\[0.5ex]
 15\},\{13, 24\}, \{20, 32\}, \{30, 17\}, \{23, 37\}, \{19, 4\}, \{28, 5\},
\{26, 9\}, \{14, 35\}, \{7, 27\}\}
\end{array}
$$


The starter $S$ passes the test. 
We leave it to the reader to see that 
the only pairing $T$ of order 13 that can be triplicated to $S$ (with key $t=4$) is
$$
T=\{(11,10),(6,4),(2,12),(9,5),(8,3),(7,1)\}.
$$
It is a starter but not a strong starter.

\smallskip
The latter starter $T$ also gives a counterexample to a strengthened version of the above stated Conjecture,
where the requirement that the base starter be strong is dropped. In this case, $3\notin \hat T$, yet
the modular Sudoku problem obtained from the data $(T,\,t=3)$ does not have a solution.
To see it, note that there are 4 pairs with sum $\equiv 1\pmod{13}$ in the triplication table $\Sigma_{13}$.
The existence of  a Sudoku solution and therefore a corresponding strong starter  
contradicts Lemma~\ref{lem:weak23}.


\section{Computations and discussion}
\label{sec:computations}

\subsubsection*{About the program}

We implemented the described method of starter triplication in Python (Python 3.11.5)
using SAT solver library \zsat (z3-solver 4.15.0.0).

Programming the constraints defining the modular Sudoku problem is straightforward with functions
of \zsat library.  
For clarity of code, along with unknowns of interest, $U_i, V_i$ ($i=0,\dots,3q$),
the auxiliary unknowns  are used in the code: 

\begin{itemize}
\item
``differences'' $D_i$ ($i=0,\dots,3q$)
with constraints $0\leq D_i\leq 2$, $D_i=U_i-V_i\pmod 3$,
\item
``weak sums'' $S_i$ ($i\in W=\cup \ind(W_s)$, where $W_s$ are the weak sets)
with constraints $0\leq S_i\leq 2$, $\;S_i=U_i+V_i\pmod 3$,
\item
the ``dummy variable'' $Z$ with fixed value $Z=0$.
\end{itemize}

Below we explain in detail how the \zsat solver is programmatically set up in the special case of Example considered in Sections~\ref{sec:sigma_p}--\ref{sec:color} (``Demo Example''). The full listing of a working Python program for this example is given in Appendix.

In general, the base strong starter and the value of the key are variable parameters;  the computation of weak sets 
(Sec.~\ref{sec:weak}) and monochrome sets (Sec.~\ref{sec:color}) should be automated.  This has been done --- for computations reported later in this section. 
Since that part of code is independent of any particular method or algorithm for solving the modular Sudoku problem, 
it is not presented here.

Another solver-independent part is a utility that merges the triplication table $\Sigma_p$ and the found solution
modulo 3 in table $\Sigma_3$ into a starter of order $3p$ using CRT. 
For that purpose, in the demo program in Appendix, a programming trick using Python's `dictionary' data structure is employed. For instance, the ``label'' $(5,1)$ in the dictionary points to the value $19$, which is congruent to $5\mmod 7$ and to $1\mmod 3$. 
A second starter (see Theorem~\ref{thm:sudoku-symmetry}) is recovered similarly from the same solution of Sudoku.

\subsubsection*{Setting up the solver in Demo Example}

We refer to Example followed through in Sections~\ref{sec:sigma_p}--\ref{sec:color}.

Recall that the order of the base starter is $p=7$, of the resulting starter $3p=21$;
the parameter $q=(p-1)/2=3$.

The unknowns are: $U_i, V_i, D_i$ ($i=0,\dots,9$), $S_i$ ($i\in W$, $W=\{1,2,4,5,6,7,9\}$ --- set of indexes of weak pairs), and $Z$. 

Here is the complete list of constraints defining the Sudoku-type problem mod 3 as pertain to the data in Example. Instead of the mathematical inequality sign ``$\neq$'', we use the keyword {\tt Distinct}\ defined
in the \zsat library.

\begin{enumerate}
    \item[0.0)] Dummy variable:
       $Z=0$
    
    \item[0.1)] Binding main unknowns and differences: 	
    $U_i-V_i-D_i=0 \mmod 3\;$ for $\;0\le i\le 9$.

    \item[0.2)] Binding main unknowns and sums:	
	$U_i+V_i-S_i=0 \mmod 3\;$ for $\;i\in W=\{1,2,4,5,6,7,9\}$.

    \item[1)] Range Constraints: 	
	$0\leq U_i\leq 2$, $\quad 0\leq V_i\leq 2$, $\quad 0\leq D_i\leq 2$\; for $\;0\le i\le 9$,
    $0\leq S_i\leq 2$ for $i\in W$.
	
    \item[2)] 	
        Row Constraints: \\
	\qquad {\tt Distinct}($D_{3i-2}, D_{3i-1}, D_{3i}$)\; for $\;1\le i\le 3$.
	
    \item[3.1)] 	
	Weak Set Constraint for weak pair with sum 0: \\
    \qquad $S_2\ne 0 \mmod 3$ or, equivalently:
    \qquad {\tt Distinct}$(S_2, Z)$. 

    \item[3.2)] 	
	Weak Set Constraints involving weak pairs with nonzero sums:  \\
    \qquad {\tt Distinct}$(S_i, S_j)$ 
     for $(i,j)$ in $\{(4,9),(1,5),(6,7)\}$.

   \item[4)] Color Constraints:
\\  \qquad {\tt Distinct}$(V_5,V_9,Z)$ (for degenerate monochrome set $M_0$),
\\  \qquad 
{\tt Distinct}$(U_0, V_0, U_7)$,	
\quad{\tt Distinct}$(U_1,U_6,U_8)$,	
\quad{\tt Distinct}$(V_1,U_2, U_9)$,	
\\  \qquad 	
{\tt Distinct}$(V_2,U_4, V_6)$,
\quad{\tt Distinct}$(U_3,U_5, V_7)$,	
\quad{\tt Distinct}$(V_3, V_4, V_8)$.
\end{enumerate}

\subsubsection*{Technical details}

Computations were carried out in the Jupiter Notebook environment on a fanless mini PC with 11th Gen Intel(R) Core(TM) i7-1165G7  2.80GHz
processor and 32 GB memory under Windows 11 operating system.  
The task was run on one of the four CPU kernels and, according to occasional checks with Task Manager,
occupied between 90\% and 100\% of that kernel's load.
Certain issues beyond our control and observation could potentially affect relative performance at different
stages of computation --- such as system's self-maintenance and the CPU heat control.

Our judgement about solver's performance in different series of experiments relies on recorded elapsed times.
Time benchmarks, of course, depend on system's particulars.
Possibly, benchmarks such as the number of solver's cycles might give more objective and invariant information about the behavior of the algorithm.
But the solver's inner statistics was not available to us.   

\subsubsection*{Specification of numerical experiments}

Base starters for the numerical experiments were generated in advance using a hill-climbing algorithm of Dinitz and Stinson
\cite{DinitzStinson81, Stinson85}.

\smallskip
Performance results for three series of experiments are reported below.

\begin{itemize}
\item
Series 1: {\em Base starters of orders $<100$, all admissible key values}.

For odd orders $m$ from $7$ to $97$ coprime with $3$, a fixed strong starter $T_m$ of order $m$ was taken
as a base starter. For every key value satisfying the necessary condition $t\in\ZZ^*_{p}\setminus \hat T_m$
(Theorem~\ref{thm:cond-key}), the modular Sudoku problem was set up and solved.

\item
Series 2: {\em Base starters of larger orders, random admissible key values}.

The set of orders in this series is the union of the following:
\begin{itemize}
\item
all odd orders $m$ from $101$ to $199$ coprime with $3$;
\item all orders in arithmetical progression with difference $12$ from $203$ to $335$;
\item orders $395$, $439$, $499$.
\end{itemize}
For every order from this list, 
a random key value   $t\in\ZZ^*_{p}\setminus \hat T_m$
was chosen, the modular Sudoku problem was set up and solved.

\item
Series 3: {\em Fixed order $m=101$, all admissible key values, multiple runs}.

A fixed strong starter $T_{m}$ of order $m=101$ (same for the whole series) was taken as a base starter.
In a subseries, the modular Sudoku problem was solved for every key value $t\in\ZZ^*_{p}\setminus \hat T_m$.
The subseries were executed ten times.
\end{itemize}

\subsubsection*{Performance results}

The measure of performance is execution time in seconds, denoted by $\tau$. 

\medskip
\noindent{\bf Numerical experiments --- Series 1}.

\medskip
Average time (in seconds) per Sudoku is shown in Table \ref{tab:time_vs_order1}.
Here, $m$ is the order of a base strong starter $T_m$, averaging is done over all $(m-1)/2$
admissible values of key $t$.

\begin{table}[h]

\centerline{	
\begin{tabular}{|c|c||c|c||c|c||c|c|}
		\hline
$m$ & time & $\;m$ & time & $\;m$ & time & $\;m$ & time \\
		\hline
7 & 0.065 & 29 & 5.276 & 53 & 21.35 & 77 & 53.07 \\
9 &  --     & 31 & 5.216 & 55 & 24.92 & 79 & 52.23 \\
11 & 0.118 & 35 & 8.741 & 59 & 26.09 & 83 & 78.46 \\
13 & 0.309 & 37 & 8.396 & 61 & 33.90 & 85 & 70.72 \\
17 & 1.017 & 41 & 10.51 & 65 & 35.91 & 89 & 73.39 \\
19 & 1.362 & 43 & 11.96 & 67 & 40.50 & 91 & 78.64 \\
23 & 2.674 & 47 & 17.09 & 71 & 49.75 & 95 & 93.73 \\
25 & 3.817 & 49 & 20.55 & 73 & 48.98 & 97 & 90.14 \\
		\hline
	\end{tabular}
}	
\caption{Average execution time for solving Sudoku of orders $<100$} 
\label{tab:time_vs_order1}
\end{table}

For most values of $m$, a significant spread between the minimum and maximum values of time
was observed: generally, about 2 times. Excluding $m=7$, the maximum observed spread was
$3.5$ times for $m=19$ and the minimum observed ratio was $1.32$ for $m=37$.

The data are plotted in Fig. \ref{fig:time_vs_order1} together with heuristic interpolation of the
average time given by the
formula
\begin{equation*}
\label{heuint1}
\tau_1(m)=0.023\cdot m^2\,\ln m.
\end{equation*}

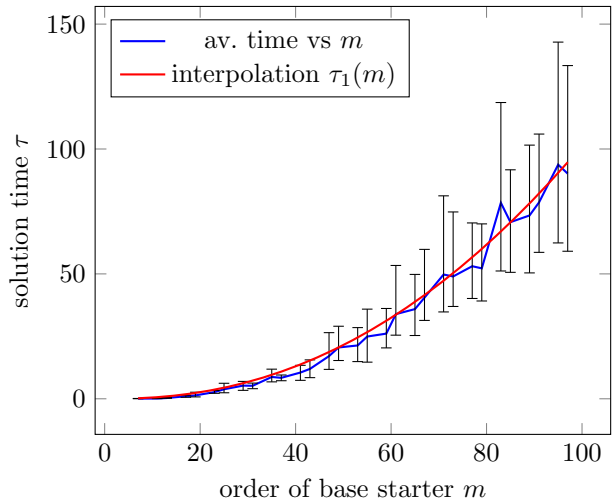
\begin{figure}[h]
\centerline{	
\begin{tikzpicture}
\pgfplotstableread{time_7-99.txt}\datatable
\begin{axis}[
  xlabel=order of base starter $m$,
  ylabel=solution time $\tau$,
  legend pos=north west,
     ]

\pgfplotstablecreatecol[
    create col/expr={\thisrow{avtime} - \thisrow{mintime}}
]{minus}{\datatable}
\pgfplotstablecreatecol[
    create col/expr={\thisrow{maxtime} - \thisrow{avtime}}
]{plus}{\datatable}

\addplot+[
    no marks,
    thick,
    error bars/.cd,
        y dir=both,
        y explicit,
        error bar style={black}, 
] table [
    x=m,
    y=avtime,
    y error minus=minus,
    y error plus=plus,
] {\datatable};

\addplot+[no marks, thick, domain=7:97]{x^2 * ln(x)* 0.0022};

\addlegendentry{av.\ time vs $m$}
\addlegendentry{interpolation $\tau_1(m)$}
\end{axis}
\end{tikzpicture}
}
\caption{Execution time for solving Sudoku of orders $<100$}
\label{fig:time_vs_order1}
\end{figure}

\medskip
\noindent{\bf Numerical experiments --- Series 2}.

\medskip
Time (in seconds) per Sudoku is shown in Table \ref{tab:time_vs_order2}.
Here, $m$ is the order of a base strong starter $T_m$. A single computation was done for every order.
For formatting reasons, the table does not include data for orders $m\in (100,200)$ with $m\mmod 6 =1$. 
However, those data are included in the plot shown in Fig.\ref{fig:time_vs_order2}.


\begin{table}[h]
\centerline{	
\begin{tabular}{|c|c||c|c||c|c||c|c|}
		\hline
$m$ & time & $\;m$ & time & $\;m$ & time & $\;m$ & time \\
		\hline
101 & 124 & 149 & 374 & 197 & 629 & 287 & 2267 \\
107 & 94 & 155 & 337 & 203 & 735 & 299 & 4785 \\
113 & 120 & 161 & 468 & 215 & 985 & 311 & 3464 \\
119 & 153 & 167 & 306 & 227 & 1586 & 323 & 3384 \\
125 & 253 & 173 & 485 & 239 & 1280 & 335 & 4737 \\
131 & 199 & 179 & 407 & 251 & 2750 & 395 & 17344 \\
137 & 232 & 185 & 670 & 263 & 2033 & 439 & 20788 \\
143 & 251 & 191 & 593 & 275 & 1879 & 499 & 17962 \\
		\hline
	\end{tabular}
}	
\caption{Execution time for solving Sudoku of orders between 100 and 500}
\label{tab:time_vs_order2}
\end{table}


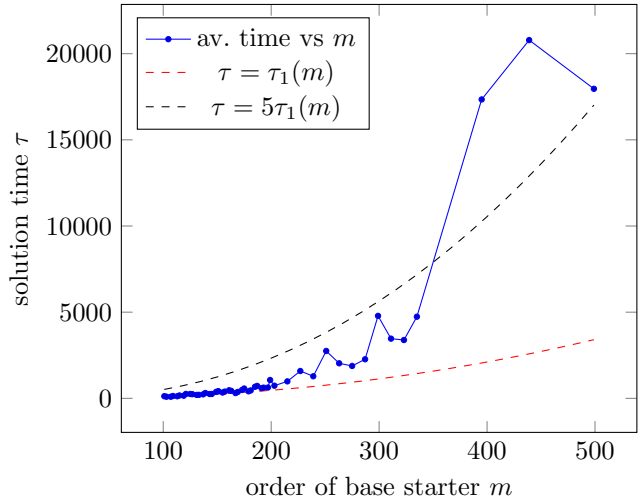
\begin{figure}[h]
\centerline{	
\begin{tikzpicture}
\pgfplotstableread{time_101-499.txt}\datatable
\begin{axis}[
  mark size = 1pt,
  xlabel=order of base starter $m$,
  ylabel=solution time $\tau$,
  legend pos=north west,
      x filter/.expression={x<100 ? nan : x},
      scaled ticks=false,
      y tick label style={
        /pgf/number format/.cd,
         fixed, set thousands separator={},
        /tikz/.cd
    },
     ]
\addplot table [y=time, x=m]{\datatable};

\addplot+[no marks, thin, dashed, domain=101:499]{x^2 * ln(x)* 0.0022};
\addplot[no marks,  thin, dashed, domain=101:499]{x^2 * ln(x)* 0.011};

\addlegendentry{av.\ time vs $m$}
\addlegendentry{ $\tau=\tau_1(m)$}
\addlegendentry{ $\tau=5\tau_1(m)$}
\end{axis}
\end{tikzpicture}
}	
\caption{Execution time for solving Sudoku of orders between 100 and 500}
\label{fig:time_vs_order2}
\end{figure}

We observe that for $m\lesssim 200$, time values fit rather well in the heuristic ``law'' $\tau\approx \tau_1(m)$
seen in the discussion of Series 1.

For larger values of $m$, that approximation is no longer good. For comparison, we plotted the curve (black)
$\tau=5\tau_1(m)$. 

We note that the observed time values do not change monotonically with order. 
There is a dramatic jump between the points with $m=335$ and $m=395$ having no analogs in
other explored intervals of the $m$ axis. 

A natural question arises: do the results agree with or testify against the hypothesis
about polynomial bound of time required (with high probability) for solving the modular Sudoku problem by a SAT solver?

Althought the hypothesis looks counterintuitive, we see no clear evidence against it in the obtained data.

\medskip
\noindent{\bf Numerical experiments --- Series 3}.

\medskip
The goal of this series was to explore variations of solution time $\tau$ between different runs of the solver with identical
input data $(T, t)$ and between different values of key $t$ (for the same starter $T$) averaged over
several runs.

For each of $50$ admissible (i.e.\ satisfying the necessary conditions of Theorem~\ref{thm:cond-key})
value of $t$, the same Sudoku problem was solved 10 times. We call a sequence of 50 trials
for different admissible $t$ (ran consecutively in real time) a subseries. 
The program was thus executed 500 times in total, split into 10 subseries. 

\smallskip
The extreme values of time $\tau$ (in seconds) occurred:
\begin{itemize}
\item
Near-minimum $\tau=82.98$ in subseries 2 for key $t=30$,

\item
Minimum $\tau=82.95$ in subseries 2 for key $t=62$,

\item
Maximum $\tau=260.7$ in subseries 7 for key $t=10$,

\item
Near-maximum $\tau=257.4$ in subseries 10 for key $t=10$.

\end{itemize}



No uniformity was observed in the distribution of execution times over different values of keys, $t$.
It appears that for some key values the triplication process completes systematically quicker than
for others. The histogram  shown in Fig. \ref{fig:histogram} illustrates this thesis.

For each key value, the execution times were averaged over 10 trials. The averages ranged from
106.9 sec for $t=42$ to 201.6 sec for $t=10$. The average times were grouped into ranges of length 10
and, for every such range, the number of keys with average times in that range was found.

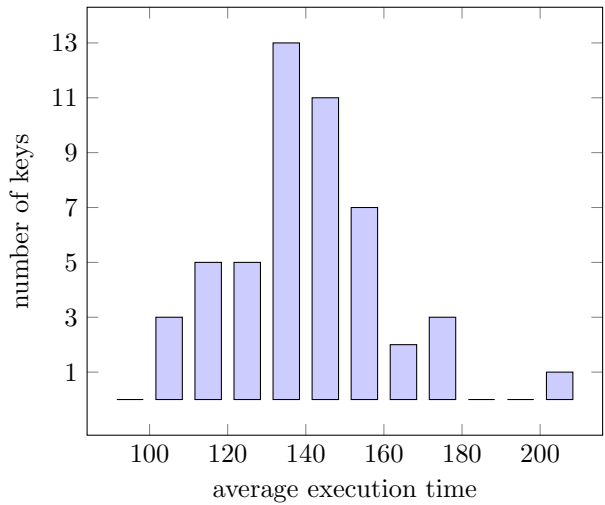
\begin{figure}[h]
\centerline{
\begin{tikzpicture}
        \begin{axis}
        [fill=blue!20,
         xtick={100,120,...,200},
        ytick={1,3,5,7,9,11,13},
	 ylabel = {number of keys},
        xlabel = {average execution time},
        ]
            \addplot[range=100:200,
             const plot, ybar,fill=blue!20]
             coordinates{(95,0)(105,3) (115,5) (125,5) (135,13)(145,11)(155,7)(165,2)(175,3)(185,0)(195,0)(205,1)};
       \end{axis}
 \end{tikzpicture}
 }
 \caption{Distribution of keys in the execution time intervals for a Sudoku of order 101}
 \label{fig:histogram}
\end{figure}

\bigskip

The final plot shown in Fig. \ref{fig:chaos} illustrates chaotic oscillations of the solution time around a trendline.

First, we order the keys so that to have average time (over 10 subseries) to grow monotonically with the 
index (with respect to the new ordering) of the key. 

After that, we plot solution time vs the above-defined index of the key for several subseries;
to avoid excessive cluttering, we selected the subseries with numbers 1, 4, 7, 10.
The dashed line is the ``trendline'' --- time averaged over all subseries for the given key.

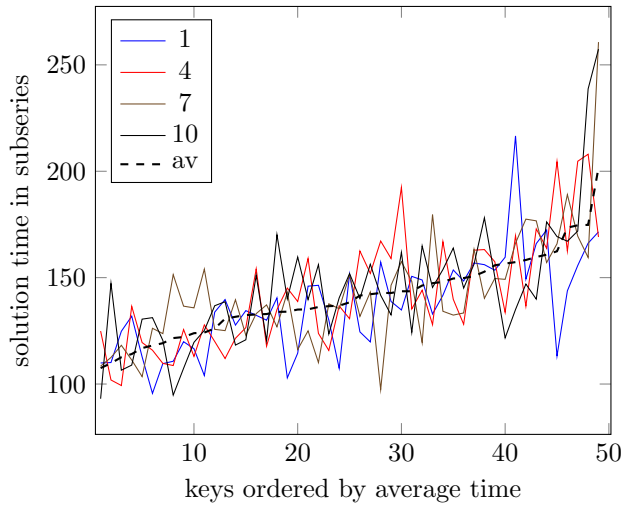
\begin{figure}[h]
\centerline{
\begin{tikzpicture}
\pgfplotstableread{ser3_keys_ordered_by_average_time.txt}\datatable
\begin{axis}[
    xmin=0.5,    
    xmax=50.2,    
  xlabel=keys ordered by average time,
  ylabel=solution time in subseries,
  legend pos=north west,     
]
\addplot+[thin,no marks,] table [x expr={\coordindex > .5 ? \coordindex : nan},
    y=1
] {\datatable};
\addlegendentry{1}

\addplot+[thin,no marks,] table [x expr={\coordindex > .5 ? \coordindex : nan},  
    y=4
] {\datatable};
\addlegendentry{4}

\addplot+[thin,no marks,] table [x expr={\coordindex > .5 ? \coordindex : nan}, 
    y=7
] {\datatable};
\addlegendentry{7}

\addplot+[thin,no marks,] table [x expr={\coordindex > .5 ? \coordindex : nan},  
    y=10
] {\datatable};
\addlegendentry{10}

\addplot+[thick, no marks, color=black, dashed,] table [x expr={\coordindex > .5 ? \coordindex : nan},  
    y=av
] {\datatable};
\addlegendentry{av}

\end{axis}
\end{tikzpicture}
}
\caption{Oscillations of the solution time
around a trendline in four subseries}
\label{fig:chaos}
\end{figure}
    
\subsubsection*{Discussion}
1. A SAT solver can be employed directly for finding strong starters. 
However, longer time is required for finding a strong starter even for small orders and the time grows
much faster as the order increases. 
	
For comparison, using \zsat solver directly, computation of a strong starter of order $n=39$ took about 200
seconds on the same system, while triplication of a starter of order $p=13$ required less that 1 second. 

\smallskip
2. On the other hand, generating strong starters by hill-climbing algorithm of Dinitz and Stinson is
much faster: generation of a sampling of 1000 strong starters of order 39 takes less than 2.5 sec.

\smallskip
3. We also note that we (OO) solved modular Sudoku problems of orders $3p$ by hand for 
$p$ up to 47.
The time per problem in the upper range of $p$ was about an hour. The employed approach was, in all likelihood, very different from the internal workings of a universal
SAT solver. We expect to formalize the process; a specialized solution algorithm
is expected to significantly outperform the universal solver; it is to be compared
with hill-climbing approach.

\smallskip
4. Our final remark is unrelated to efficiency of solvers but concerns properties of 
strong starters that distinguish ``triplicated'' starters in the totality of all strong starters 
of the given order $3p$. 
We report a small, very preliminary, yet perhaps an indicative result in this direction.
We sampled%
\footnote{Repetititions were allowed and, for order 21, inevitable: there are only $6660$ different
strong starters of order 21 \cite{Vesa10}.}
(by hill-climbing) 
 $10^6$ strong starters of each of the orders $21$, $33$ and $39$
and applied the criterion described at the end of Sec.~\ref{sec:theorems} to examine them.
Only a small fraction of those ``random'' strong starters of order 21 were found ``inconclusive''.
All others failed 
the test.
Specifically, out of $10^6$ generated strong starters of order 21 there were about $1.8\%$ ``inconclusive'', 
of order 33 -- only 11 starters, and of order 39 --- none. The starter of order 39 in Example 2 of Sec.~\ref{sec:theorems}
was found by generating ``random'' strong starters until the first passing the test found;
it happened after more than 9 million iterations.

\smallskip
The reported experimental studies allow extensions and ramifications in various directions.
One interesting possible direction is to measure a correlation between triplication runtime, on the one hand, and the partition structure (the distribution of cardinalities) of weak sets for different values of key for the same base starter. 

	\section*{Acknowledgements}

	The authors are grateful to Prof.~N.~Shalaby of MUN for attracting our attention to this problem and many discussions 
    that inspired
    this research. We thank  
    Dr.~G.V.~Kalachev of MSU for introducing us to SAT solvers.
	

\section*{Appendix: Listing of Demo Example in Python}

\definecolor{darkred}{rgb}{0.6,0.0,0.0}
\definecolor{darkgreen}{rgb}{0,0.35,0}
\definecolor{lightblue}{rgb}{0.0,0.42,0.91}
\definecolor{orange}{rgb}{0.99,0.48,0.13}
\definecolor{grass}{rgb}{0.18,0.80,0.18}
\definecolor{pink}{rgb}{0.97,0.15,0.45}

\lstset{frame=tb,
   language=Python,
   aboveskip=3mm,
   belowskip=3mm,
   showstringspaces=false,
   columns=flexible,
   basicstyle={\small\ttfamily},
   numberstyle=\color{dkgray},
   commentstyle=\color{darkred},
   identifierstyle=\color{blue},
   stringstyle=\color{purple},
   keywordstyle=\color{darkgreen},
   breaklines=false,
   breakatwhitespace=true,
   tabsize=3,
   morekeywords={,as,assert,with,yield,self,True,False,}
}

\begin{lstlisting}
#!/usr/bin/env python3 

# This listing is Appendix in the paper: 
# "Constructing strong starters of orders 3p: triplication with SAT solver"
# by O. Ogandzhanyants, S. Sadov, M. Kondratieva
# May, 2025

#------------------------------------------------------------------------
# Solving "Sudoku mod3" problem for triplication of starters with z3 solver (Demo)
#-------------------------------------------------------------------------
# Loading functions from z3 solver library 
from z3 import Int, Solver, And, Distinct, sat
#-------------------------------------------------------------------------

# Defining the parameters

m=7    # Order of base strong starter
q=3    # q=(m-1)/2 :  number of pairs in base starter
n=21   # n=3*m : Order of triplicated starter
k=10   # k=(n-1)/2 = 3*q+1 : number of pairs in triplicated starter

#.........................................................................
# Defining the base starter and key
T=[(2,3),(4,6),(1,5)]  # base starter
key=1
print("Base starter=",T,"key=",key)
Sigma_p=[(1,1),\
        (2,3),(3,4),(5,6),\
        (4,6),(5,0),(2,4),\
        (1,5),(2,6),(3,0)]
# Verify:
Tplus=[((key+ab[0])% 7,(key+ab[1])% 7) for ab in T]
Tminus=[((key-ab[1])% 7,(key-ab[0])% 7) for ab in T]
assert (Sigma_p[0]==(key,key))
assert ([Sigma_p[i] for i in [1,4,7]]==T)
assert ([Sigma_p[i] for i in [2,5,8]]==Tplus)
assert ([Sigma_p[i] for i in [3,6,9]]==Tminus)

# Defining variables for modular Sudoku problem
# Main unknowns: elements of pairs (U,V) 
U=[Int(f'U_{i}') for i in range(k)]
V=[Int(f'V_{i}') for i in range(k)]

# Aux variables for differences D[i]=U[i]-V[i] mod 3 
D=[Int(f'D_{i}') for i in range(k)]

# Aux variables for sums S[i]=U[i]+V[i] mod 3 
#   (Only those S[i] with weak indexes will be essential.)
S=[Int(f'S_{i}') for i in range(k)]

# List of weak sets
weak_sets=[(2),(4,9),(1,5),(6,7)]

# List of indexes of weak pairs (=union of weak sets)
weak_indexes=[1,2,4,5,6,7,9]

# Dummy variable whose value will be set to 0
Z=Int(f'Z')

#.........................................................................
# Creating an instance of SAT solver from z3 library
Sudoku_solver=Solver()

#.........................................................................
# Setting up the constraints
    
# 0) Binding constraints:
    # Constraining dummy variable: Z=0
Sudoku_solver.add(Z==0)

    # Binding differences: D[i]==U[i]-V[i] mod 3        
for i in range(k):
    Sudoku_solver.add((U[i]-V[i]-D[i]) % 3 ==0)

    # Binding sums in weak pairs: S[i]==U[i]+V[i] mod 3        
for i in weak_indexes:
    Sudoku_solver.add((U[i]+V[i]-S[i]) % 3 ==0)

# - - - - - - - - - - - - - - - - - - - - - - - - - - - - - - 
# 1) Range Constraints: all values must be in {0,1,2}
for i in range(k):
    Sudoku_solver.add(And(0<= U[i], U[i]<=2))
    Sudoku_solver.add(And(0<= V[i], V[i]<=2))
    Sudoku_solver.add(And(0<= D[i], D[i]<=2))

for i in weak_indexes:
    Sudoku_solver.add(And(0<= S[i], S[i]<=2))

# - - - - - - - - - - - - - - - - - - - - - - - - - - - - - - 
# 2) Row Constraints: differences mod 3 in every row must be distinct
for i in range(q):  # in our example, i=0,1,2
    Sudoku_solver.add(Distinct(D[3*i+1: 3*i+4]))

# - - - - - - - - - - - - - - - - - - - - - - - - - - - - - - 
# 3) Weak Set Constraints: pair sums in weak sets must be distinct
    
    # Weak set {2} of type 1 is appended by dummy;  
Sudoku_solver.add(Distinct([S[2],Z])) # equivalent to S[2] != 0

    # Every weak set of size >=2 goes as is
Sudoku_solver.add(Distinct([S[4],S[9]]))  # For weak pair (4,9)
Sudoku_solver.add(Distinct([S[1],S[5]]))  # For weak pair (1,5)
Sudoku_solver.add(Distinct([S[6],S[7]]))  # For weak pair (6,7)

# - - - - - - - - - - - - - - - - - - - - - - - - - - - - - - 
# 4) Color Constraints: values in every monochrome triple must be distinct
Sudoku_solver.add(Distinct([V[5],V[9], Z]))  # Degenerate triple is appended by dummy
Sudoku_solver.add(Distinct([U[0], V[0], U[7]]))	
Sudoku_solver.add(Distinct([U[1], U[6], U[8]]))	
Sudoku_solver.add(Distinct([V[1], U[2], U[9]]))	
Sudoku_solver.add(Distinct([V[2], U[4], V[6]]))	
Sudoku_solver.add(Distinct([U[3], U[5], V[7]]))	
Sudoku_solver.add(Distinct([V[3], V[4], V[8]]))

#.........................................................................
#  Solving modular Sudoku problem
if Sudoku_solver.check() == sat:
    solution=Sudoku_solver.model()
    Sigma_3=[(solution.evaluate(U[i]).as_long(), solution.evaluate(V[i]).as_long())\
    			 for i in range(k)] 
    print("Sigma7=",Sigma_p)  
    print("Sigma3=",Sigma_3)           
        
#.........................................................................
# Finally, we merge the tables Sigma_p and Sigma_3 by CRT to get the new starter  

    # Conversion of pair of residues mod 7 and mod 3 to residue mod 21
    def CRT_merge(pairing_mod7, pairing_mod3, CRT_dictionary):
        return [tuple(CRT_dictionary[x,X] for x,X in zip(uv,UV))\
                    for uv,UV in zip(pairing_mod7,pairing_mod3)]

    # Building two starters from the same solution of Sudoku (Theorem 3)
      # dictionary to recover x from (x mod 7, x mod 3)
    CRT_dictionary1={(x%7,x%3): x for x in range(21)}  
    CRT_dictionary2={(x%7,(-x)%3): x for x in range(21)}  # x <- (x mod 7, -x mod 3)
    new_starter1=CRT_merge(Sigma_p, Sigma_3, CRT_dictionary1)
    new_starter2=CRT_merge(Sigma_p, Sigma_3, CRT_dictionary2)

    print ("Triplicated starters:")
    print (new_starter1)
    print (new_starter2)

#.........................................................................
else:
    print ("No solution")
    
# Output: (another run of the code may produce a different solution!)
 Base starter= [(2, 3), (4, 6), (1, 5)] key= 1
 Sigma7= [(1, 1), (2, 3), (3, 4), (5, 6), (4, 6), (5, 0), (2, 4), (1, 5), (2, 6), (3, 0)]
 Sigma3= [(1, 2), (1, 0), (2, 0), (1, 1), (2, 2), (0, 2), (0, 1), (0, 2), (2, 0), (1, 1)]
 Triplicated starters: 
 [(1,8), (16,3), (17,18), (19,13), (11,20), (12,14), (9,4), (15,5), (2,6), (10,7)]
 [(8,1), (2,3), (10,18), (5,20), (4,13), (12,7), (9,11), (15,19), (16,6), (17,14)]

\end{lstlisting}
    

\begin{thebibliography}{10}
		
		\bibitem{Dinitz07}
		J.H. Dinitz, 
		Starters. In: C.J. Colbourn, J.H. Dinitz (eds.) {\em Handbook of Combinatorial Designs}, 2nd ed., Chapman \& Hall/CRC, Boca Raton - London -New York, 2007, Ch.~55, pp.~622--628.
		
		\bibitem{DinitzStinson81}
		J.H. Dinitz, D.R. Stinson, A fast algorithm for finding strong starters, {\em SIAM J. Alg.
Disc. Methods} 2 (1981), 50--56.

		\bibitem{Horton89} J.D. Horton, Orthogonal starters in finite abelian groups, {\it Discrete Math.}, 79 (1989/90), 265--278.
		
        \bibitem{Vesa10}
        V. Linja-aho, P.R. \"Osterg\aa{}rd, Classification of starters, 
        {\em J. Comb. Math. Comb. Comput.} 075 (2010), 153--159.

       \bibitem{Wiki-SAT}
		SAT solver. Wikipedia, The Free Encyclopedia. 
        \url{https://en.wikipedia.org/w/index.php?title=SAT_solver&oldid=1291806065}.
        Date retrieved: 26 May 2025 05:11 UTC

        \bibitem{Stinson85}
		D.R. Stinson, Hill-climbing algorithms for the construction of combinatorial designs. In: 
        C.J. Colbourn, M.J. Colbourn (eds.) {\em Algorithms in combinatorial design theory}, North-Holland Math. Stud., vol. 114, North-Holland, Amsterdam, 1985, pp. 321--334. DOI \verb!10.1016/S0304-0208(08)72988-8!

		\bibitem{Stinson23}
		D.R. Stinson, Orthogonal and Strong Frame Starters: Revisited. In: C.J. Colbourn, J.H. Dinitz (eds.) {\em New Advances in Designs, Codes and Cryptography}. NADCC 2022. Fields Institute Communications, vol 86. Springer, Cham. (2024). DOI \verb!10.1007/978-3-031-48679-1_21!

 	\end{thebibliography}
\end{document}